\journal{Journal of \LaTeX\ Templates}
\normalfont\fontsize{12}{13}\bfseries}{\thesubsection}{1em}{}
\tikzstyle{vertex}=[circle, draw, inner sep=0pt, minimum size=6pt]
\newcommand{\vertex}{\node[vertex]}
\newtheorem{theorem}{Theorem}[section]
\newtheorem{note}[theorem]{Note}
\newtheorem{lemma}[theorem]{Lemma}
\newtheorem{corollary}[theorem]{Corollary}
\newtheorem{definition}[theorem]{Definition}
\newtheorem{example}[theorem]{Example}
\newtheorem{question}[theorem]{Question}
\newtheorem{conjecture}[theorem]{Conjecture}
\begin{document}

\begin{frontmatter}

\title{Maximum nullity and zero forcing number on cubic graphs}
\author{Saieed Akbari\footnote{\it{Email: s\underline{ }akbari@sharif.edu}, Corresponding author}}
\author{Ebrahim Vatandoost\footnote{\it{Email: vatandoost@sci.ikiu.ac.ir}}}
\author{Yasser Golkhandy Pour\footnote{\it {Email: y.golkhandypour@edu.ikiu.ac.ir}}}
\address{\textsuperscript{1}Department of Mathematical Sciences, Sharif University of Technology, Tehran, Iran}
\address{\textsuperscript{2,3} Department of Mathematical Sciences, Imam Khomeini International University, Qazvin, Iran}

\begin{abstract}
Let $G$ be a graph. The maximum nullity of $G$, denoted by $M(G)$, is defined to be the largest possible nullity over all real symmetric matrices $A$ whose $a_{ij}\neq 0$ for $i\neq j$, whenever two vertices $u_i$ and $u_j$ of $G$ are adjacent.
In this paper, we characterize all cubic graphs with zero forcing number $3$. As a corollary, it is shown that if the zero forcing number is $3$, then $M(G)=3$. In addition, we introduce a family of cubic graphs containing graphs $G$ with $M(G)=Z(G)=4$.
\newline
Also, we provide an algorithm which make a relation between maximum nullity of $G$ and the number of leaves in a spanning tree of $G$.
\end{abstract}

\begin{keyword}
Maximum nullity, Zero forcing number, Cubic graph.
\MSC[2010] 05C07\sep  05C85.
\end{keyword}

\end{frontmatter}

\section{Introduction}
Let $G$ be a graph with vertex set $V(G)$ and the edge set $E(G)$. For each pair of vertices $u, v\in V(G)$, if $u$ is adjacent with $v$, then we write $u\sim v$. The subset $\{v\in V(G): u\sim v\}$ of vertices is the neighbors set of $u\in V(G)$ and is denoted by $N(u)$. Also, $|N(u)|$ is called to be the degree of $u$ and is denoted by $deg(u)$; and $\delta(G)$ is the minimum degree between all vertices in $G$. When $T\subseteq V(G)$, the {\it induced subgraph} on $T$, $\langle T\rangle$, consists of $T$ and all edges whose endpoints are contained in $T$. A set $S$ of vertices is an {\it independent set} if no pair are adjacent. If $G$ is a graph, then $\kappa'(G)$ denotes the {\it edge connectivity} of $G$.

 A graph $H$ is called a \textit{minor} of a graph $G$ if a copy of $H$ can be obtained from $G$ by deleting and/or contracting edges of $G$. Deletion and contraction can be performed in any order, as long as we keep track of which edge is which. Thus the minors of $G$ can be described as contractions of subgraphs of $G$.

A graph $G$, which is not a path, is said to be a graph of {\it two parallel paths} if there exist two disjoint paths of $G$ that cover the vertices of $G$ and the edges between two paths (if there exist), which are drawn as a segment (not a curve), do not interrupt each other. See \cite{3}, for more information. Note that union of two disjoint paths are considered as a two parallel path.

Let $S_n(\mathbb{R})$ be the set of all symmetric matrices of order $n$ over the real number. Suppose that $A\in S_n(\mathbb{R})$. Then the graph of $A$ which is denoted by $\mathcal{G}(A)$ is a graph with the vertex set $\{v_1, \ldots, v_n\}$ and the edge set $\{v_i\sim v_j : a_{ij}\neq 0, 0\leq i<j\leq n\}$. It should be noted that the diagonal of $A$ has no role in the determining of $\mathcal{G}(A)$.
\newline
The {\it set of symmetric matrices} of graph $G$ is the set $S(G)=\{A\in S_n(\mathbb{R}) : \mathcal{G}(A)=G\}$. The {\it minimum rank} of a graph $G$ of order $n$ is defined to be the minimum cardinality between the rank of symmetric matrices in $S(G)$ and denoted by $mr(G)$.
Similarly, the {\it maximum nullity} of $G$ is defined to be the maximum cardinality between the nullity of symmetric matrices in $S(G)$; and is denoted by $M(G)$. Clearly, $mr(G)+M(G)=n$.

One of the most interesting problems on minimum rank is to characterize $mr(G)$ for graphs. In this regard, many researchers have been trying to find an upper or lower bound for the minimum rank. For more results on this topic, see \cite{4}, \cite{2}, \cite{10} and \cite{1}.

 In 2007, Charles R. Johnson $et~al.$ \cite{3} characterized all simple undirected graph $G$ such that any real matrix that has graph $G$ has no eigenvalue of multiplicity more than two. Consequently, they described all graphs $G$ for which $M(G)=2$. In 2008, F. Barioli {\it et al.} (AIM Minimum Rank Work Group) \cite{7}, established an upper bound for the maximum nullity. They used the technique of zero forcing parameter of graph $G$ and found an upper bound for the maximum nullity of $G$ related to zero forcing sets.

Let $G$ be a graph whose each vertex colored with white or black; and let $u$ be a black vertex of $G$ and exactly one neighbor
$v$ of $u$ is white. Then $u$ changes the color of $v$ to black. This method is called the {\it color-change rule}.
\newline
Given a coloring of $G$, the {\it derived coloring} is the result of applying the color-change rule until no more changes are possible. A {\it zero forcing set} for a graph $G$ is a subset of vertices $Z\subseteq V(G)$ such that
if initially the vertices in $Z$ are colored black and the remaining vertices are colored white, then the derived coloring of $G$ is all black. The minimum of $|Z|$ over all zero forcing sets $Z$ is called the {\it zero forcing number} of $G$ and denoted by $Z(G)$.

 In this paper, we obtain some families of cubic graphs $G$ whose zero forcing number is $3$. As a corollary, it is shown that in this family of cubic graphs $M(G)=3$. Hence we introduce a new family of graphs with $M(G)=Z(G)=3$. This gives a partial answer to the following open question proposed by AIM Minimum Rank-Special Graphs Work Group \cite{7}.
 \begin{question}
    Determine all graphs $G$ for which $M(G)=Z(G)$?
 \end{question}
In \cite{11}, Gentner {\em et. al} considered some upper bounds for the zero forcing number of a graph. The following conjecture was proposed in \cite{11}.
\begin{conjecture}\cite{11}
  If $G$ is a connected graph of order $n$ and maximum degree $3$, then $Z(G)\leq n/3+2$.
\end{conjecture}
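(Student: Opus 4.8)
The plan is to prove the bound by strong induction on $n$, using the standard chain description of zero forcing: a set $Z$ is a zero forcing set of $G$ exactly when $V(G)$ decomposes into vertex-disjoint paths of $G$, one issuing from each vertex of $Z$, that are realizable as forcing chains; so the claim amounts to covering $V(G)$ by at most $n/3+2$ such paths, i.e.\ with average length about $3$. Two cheap moves shrink the problem. If $G$ has a pendant vertex $v$ (neighbour $u$), delete it: $Z(G)=Z(G-v)$, and induction on $G-v$ already gives $Z(G)<n/3+2$. If $G$ has a path of at least three consecutive degree-$2$ vertices, delete three of them and reconnect their neighbours by an edge (with the obvious modification if those neighbours are already adjacent); this lowers $n$ by $3$ and raises $Z(G)$ by at most $1$, and induction closes as $(n-3)/3+2+1=n/3+2$. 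So we may assume $G$ has minimum degree $\ge 2$ and no long suppressible path.

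For such $G$ the core of the argument is a reduction lemma: every $G$ outside a fixed finite family contains a nonempty $A\subsetneq V(G)$ with $G-A$ connected and $|A|\ge 3\bigl(Z(G)-Z(G-A)\bigr)$. Granting it, the induction finishes: $G-A$ is connected with maximum degree $\le 3$, so $Z(G-A)\le\tfrac{n-|A|}{3}+2$, hence
\[
Z(G)\ \le\ Z(G-A)+\tfrac{|A|}{3}\ \le\ \Bigl(\tfrac{n-|A|}{3}+2\Bigr)+\tfrac{|A|}{3}\ =\ \tfrac n3+2 .
\]
The useful instance is $|A|=3$ with $Z(G)\le Z(G-A)+1$, improving on the trivial $Z(G)\le Z(G-A)+3$; the gain comes from choosing $A$ so that, once $G-A$ is all black, one extra black vertex in or next to $A$ triggers a forcing chain through $A$. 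I would split the search for $A$ by the connectivity of $G$: if $G$ has a cut vertex or a $2$-cut, take $A$ inside a smallest branch or along a shortest path across the cut; in the remaining $3$-connected case, use that bounded degree forces short cycles (small girth) and take $A$ to be three vertices of a shortest cycle or the neighbourhood of a suitable vertex, building the zero forcing set of $G-A$ via the spanning-tree construction of the previous section and extending it across $A$.

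The base cases are the finitely many small graphs and the exceptional family of the reduction lemma; these include the extremal graphs $K_4$ ($n=4$, $Z=3$), $K_{3,3}$ ($n=6$, $Z=4$) and the small cubic graphs of the $M(G)=Z(G)=4$ family introduced above, each meeting $Z(G)=\lfloor n/3\rfloor+2$ and checked by hand. I expect the reduction lemma to be the main obstacle, the culprit being the ``third-neighbour'' effect: in a genuinely $3$-regular graph almost every black vertex on a would-be chain has a white neighbour off the chain, so vertices cannot be stripped off greedily. One must instead exhibit, in every sufficiently large $2$-connected essentially cubic graph, an explicit local configuration — a short cycle, a small cut, or a favourable spanning-tree leaf — around which three vertices can be removed at cost at most $1$ while preserving connectivity, and then determine precisely the finitely many configurations where this fails, which should recover exactly the extremal graphs. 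Getting that case analysis and the $+1$ bookkeeping to be simultaneously tight is where the real work lies.
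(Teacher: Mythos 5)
The statement you set out to prove is false, and the paper quotes it only in order to refute it: Fig.~\ref{fig34} exhibits a connected cubic graph $G$ of order $n=16$ with $Z(G)=8$, while the conjectured bound would force $Z(G)\le 16/3+2<8$. That graph is a central vertex $v$ joined to three copies of a five-vertex gadget: in each copy an attaching vertex $a_i$ is joined to a non-adjacent twin pair $b_i,c_i$, which in turn are joined to an adjacent twin pair $d_i,e_i$, so that $\{b_i,c_i,d_i,e_i\}$ induces $K_4$ minus an edge and the gadget meets the rest of the graph in the single edge $va_i$. Any zero forcing set must contain a vertex of each twin pair, hence at least two vertices per gadget; and a gadget meeting $Z$ in exactly two vertices (necessarily one from each pair, e.g.\ $b_i$ and $d_i$) can never initiate a force internally, since each of its black vertices keeps two white neighbours. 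Such a gadget can only be activated by $v$ forcing $a_i$, and $v$ performs at most one force; so at least two of the three gadgets need a third black vertex, giving $Z(G)\ge 3+3+2=8$. No proof of the conjecture can therefore succeed.

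The step of your outline that cannot be repaired is the reduction lemma asserting that every graph outside a fixed finite family contains a set $A$ with $G-A$ connected and $|A|\ge 3\bigl(Z(G)-Z(G-A)\bigr)$. If that lemma held, your induction would establish the bound for all but finitely many graphs, whereas the gadget above produces arbitrarily large violations: attaching such gadgets to the vertices of any cubic frame costs three black vertices per five gadget vertices whenever the gadget must self-activate, a rate strictly worse than one per three. This is precisely the ``third-neighbour effect'' you flagged as the main obstacle; the twin pairs make it unavoidable rather than merely inconvenient, and they also defeat the ``take $A$ inside a smallest branch of a cut vertex'' strategy, since here every branch is such a gadget. (Two smaller inaccuracies: $Z(G)=Z(G-v)$ for a pendant vertex $v$ fails in general, e.g.\ for stars, where only $Z(G)-1\le Z(G-v)\le Z(G)$ holds; and your list of ``extremal'' base cases already misses the true extremal behaviour exhibited by Fig.~\ref{fig34}.) Relative to this paper, the correct task is not to prove the conjecture but to verify the counterexample, for instance by the twin-pair and gadget-activation argument sketched above.
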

As a counterexample towards this conjecture, we present a cubic graph of order $16$ whose zero forcing number is $8$. See Fig. \ref{fig34}. The given graph $G$ has $M(G)=Z(G)=8$.
\begin{figure}[h!]
\begin{center}
\[\begin{tikzpicture}
\vertex(1) at (0,0)[label=above:$$] [fill=black] {};
\vertex(2) at (-1.5,-1)[label=above:$$] [fill=black] {};
\vertex(3) at (0,-1)[label=above:$$] [fill=black] {};
\vertex(4) at (1.5,-1)[label=above:$$] [fill=black] {};
\vertex(5) at (-2,-2)[label=above:$$] [fill=black] {};
\vertex(6) at (-1,-2)[label=above:$$] [fill=black] {};
\vertex(7) at (-.5,-2)[label=above:$$] [fill=black] {};
\vertex(8) at (.5,-2)[label=above:$$] [fill=black] {};
\vertex(9) at (1,-2)[label=above:$$] [fill=black] {};
\vertex(10) at (2,-2)[label=above:$$] [fill=black] {};
\vertex(11) at (-2,-3)[label=above:$$] [fill=black] {};
\vertex(12) at (-1,-3)[label=above:$$] [fill=black] {};
\vertex(13) at (-.5,-3)[label=above:$$] [fill=black] {};
\vertex(14) at (.5,-3)[label=above:$$] [fill=black] {};
\vertex(15) at (1,-3)[label=above:$$] [fill=black] {};
\vertex(16) at (2,-3)[label=above:$$] [fill=black] {};
\path
(1) edge (2)
(1) edge (3)
(1) edge (4)
(5) edge (2)
(6) edge (2)
(7) edge (3)
(8) edge (3)
(9) edge (4)
(10) edge (4)
(11) edge (5)
(12) edge (5)
(12) edge (11)
(7) edge (13)
(7) edge (14)
(13) edge (14)
(6) edge (11)
(6) edge (12)
(9) edge (15)
(9) edge (16)
(15) edge (16)
(15) edge (10)
(16) edge (10)
(8) edge (14)
(8) edge (13)

;

\end{tikzpicture}\]
\caption{{\footnotesize A graph of order $16$ whose zero forcing number is $8$.}}
\label{fig34}
\end{center}
\end{figure}
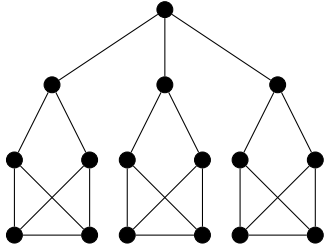
\newline
In the sequel, we find a bound for maximum nullity and zero forcing number in a cubic graph $G$. Finally, we use eigenvalues of a graph $G$ and find a lower bound for the maximum nullity of $G$. Also, we use this bound and consider the maximum nullity of {\em Heawood graph}, and show that in this family of cubic graphs, the maximum nullity and zero forcing number have the same value.
\section{Cubic graphs $G$ with $M(G)=Z(G)=3$}
\subsection{Preliminaries}
In this section, we provide some results which will be used later.
 \begin{theorem}\cite{3}\label{a3}
   The graph $G$ satisfies $M(G)=2$ if and only if $G$ is a graph of two parallel paths or $G$ is one of the types listed in Fig. \ref{fig17}.
   \begin{figure}[h!]
\begin{center}
\[\begin{tikzpicture}
\vertex(1) at (-.2,0)[label=above:$$] [fill=black] {};
\vertex(2) at (-1,0)[label=above:$$] [fill=black] {};
\vertex(3) at (.6,0)[label=above:$$] [fill=black] {};
\vertex(4) at (0.2,-.75)[label=above:$$] [fill=black] {};
\vertex(5) at (-0.6,-.75)[label=above:$$] [fill=black] {};
\vertex(6) at (-0.6,-1.5)[label=above:$$] [fill=black] {};
\vertex(7) at (-0.6,-2.25)[label=above:$$] [fill=black] {};
\vertex(8) at (0.2,-1.5)[label=above:$$] [fill=black] {};
\vertex(9) at (0.2,-2.25)[label=above:$$] [fill=black] {};
\vertex(10) at (.6,.75)[label=above:$$] [fill=black] {};
\vertex(11) at (.6,1.5)[label=above:$$] [fill=black] {};
\vertex(12) at (-.2,.75)[label=above:$$] [fill=black] {};
\vertex(13) at (-.2,1.5)[label=above:$$] [fill=black] {};
\vertex(14) at (-1,.75)[label=above:$$] [fill=black] {};
\vertex(15) at (-1,1.5)[label=above:$$] [fill=black] {};
\vertex(a1) at (1.8,0)[label=above:$$] [fill=black] {};
\vertex(a2) at (1,0)[label=above:$$] [fill=black] {};
\vertex(a3) at (2.6,0)[label=above:$$] [fill=black] {};
\vertex(a4) at (2.2,-.75)[label=above:$$] [fill=black] {};
\vertex(a5) at (1.4,-.75)[label=above:$$] [fill=black] {};
\vertex(a6) at (1.4,-1.5)[label=above:$$] [fill=black] {};
\vertex(a7) at (1.4,-2.25)[label=above:$$] [fill=black] {};
\vertex(a8) at (2.2,-1.5)[label=above:$$] [fill=black] {};
\vertex(a9) at (2.2,-2.25)[label=above:$$] [fill=black] {};
\vertex(a10) at (2.6,.75)[label=above:$$] [fill=black] {};
\vertex(a11) at (2.6,1.5)[label=above:$$] [fill=black] {};
\vertex(a12) at (1.8,.75)[label=above:$$] [fill=black] {};
\vertex(a13) at (1.8,1.5)[label=above:$$] [fill=black] {};
\vertex(a14) at (1,.75)[label=above:$$] [fill=black] {};
\vertex(a15) at (1,1.5)[label=above:$$] [fill=black] {};
\vertex(b1) at (3.8,0)[label=above:$$] [fill=black] {};
\vertex(b2) at (3,0)[label=above:$$] [fill=black] {};
\vertex(b3) at (4.6,0)[label=above:$$] [fill=black] {};
\vertex(b4) at (4.2,-.75)[label=above:$$] [fill=black] {};
\vertex(b5) at (3.4,-.75)[label=above:$$] [fill=black] {};
\vertex(b6) at (3.4,-1.5)[label=above:$$] [fill=black] {};
\vertex(b7) at (3.4,-2.25)[label=above:$$] [fill=black] {};
\vertex(b8) at (4.2,-1.5)[label=above:$$] [fill=black] {};
\vertex(b9) at (4.2,-2.25)[label=above:$$] [fill=black] {};
\vertex(b10) at (4.6,.75)[label=above:$$] [fill=black] {};
\vertex(b11) at (4.6,1.5)[label=above:$$] [fill=black] {};
\vertex(b12) at (3.8,.75)[label=above:$$] [fill=black] {};
\vertex(b13) at (3.8,1.5)[label=above:$$] [fill=black] {};
\vertex(b14) at (3,.75)[label=above:$$] [fill=black] {};
\vertex(b15) at (3,1.5)[label=above:$$] [fill=black] {};
\vertex(c1) at (5.8,0)[label=above:$$] [fill=black] {};
\vertex(c2) at (5,0)[label=above:$$] [fill=black] {};
\vertex(c3) at (6.6,0)[label=above:$$] [fill=black] {};
\vertex(c4) at (6.2,-.75)[label=above:$$] [fill=black] {};
\vertex(c5) at (5.4,-.75)[label=above:$$] [fill=black] {};
\vertex(c6) at (5.4,-1.5)[label=above:$$] [fill=black] {};
\vertex(c7) at (5.4,-2.25)[label=above:$$] [fill=black] {};
\vertex(c8) at (6.2,-1.5)[label=above:$$] [fill=black] {};
\vertex(c9) at (6.2,-2.25)[label=above:$$] [fill=black] {};
\vertex(c12) at (5.8,.75)[label=above:$$] [fill=black] {};
\vertex(c13) at (5.8,1.5)[label=above:$$] [fill=black] {};
\vertex(c14) at (5,.75)[label=above:$$] [fill=black] {};
\vertex(c15) at (5,1.5)[label=above:$$] [fill=black] {};
\vertex(d1) at (7.8,0)[label=above:$$] [fill=black] {};
\vertex(d2) at (7,0)[label=above:$$] [fill=black] {};
\vertex(d3) at (8.6,0)[label=above:$$] [fill=black] {};
\vertex(d4) at (8.2,-.75)[label=above:$$] [fill=black] {};
\vertex(d5) at (7.4,-.75)[label=above:$$] [fill=black] {};
\vertex(d6) at (7.4,-1.5)[label=above:$$] [fill=black] {};
\vertex(d7) at (7.4,-2.25)[label=above:$$] [fill=black] {};
\vertex(d8) at (8.2,-1.5)[label=above:$$] [fill=black] {};
\vertex(d9) at (8.2,-2.25)[label=above:$$] [fill=black] {};
\vertex(d12) at (7.8,.75)[label=above:$$] [fill=black] {};
\vertex(d13) at (7.8,1.5)[label=above:$$] [fill=black] {};
\vertex(d14) at (7,.75)[label=above:$$] [fill=black] {};
\vertex(d15) at (7,1.5)[label=above:$$] [fill=black] {};
\vertex(e1) at (9.8,0)[label=above:$$] [fill=black] {};
\vertex(e2) at (9,0)[label=above:$$] [fill=black] {};
\vertex(e3) at (10.6,0)[label=above:$$] [fill=black] {};
\vertex(e4) at (10.2,-.75)[label=above:$$] [fill=black] {};
\vertex(e5) at (9.4,-.75)[label=above:$$] [fill=black] {};
\vertex(e6) at (9.4,-1.5)[label=above:$$] [fill=black] {};
\vertex(e7) at (9.4,-2.25)[label=above:$$] [fill=black] {};
\vertex(e8) at (10.2,-1.5)[label=above:$$] [fill=black] {};
\vertex(e9) at (10.2,-2.25)[label=above:$$] [fill=black] {};
\vertex(e12) at (9.8,.75)[label=above:$$] [fill=black] {};
\vertex(e13) at (9.8,1.5)[label=above:$$] [fill=black] {};
\path
(7) edge[dashed] (6)
(6) edge (5)
(2) edge (5)
(5) edge (1)
(4) edge (5)
(4) edge (1)
(3) edge (4)
(4) edge (8)
(9) edge[dashed] (8)
(3) edge (1)
(10) edge (3)
(10) edge[dashed] (11)
(2) edge (1)
(1) edge (12)
(13) edge[dashed] (12)
(2) edge (14)
(15) edge[dashed] (14)
(a7) edge[dashed] (a6)
(a6) edge (a5)
(a2) edge (a5)
(a4) edge (a5)
(a4) edge (a1)
(a3) edge (a4)
(a4) edge (a8)
(a9) edge[dashed] (a8)
(a3) edge (a1)
(a10) edge (a3)
(a10) edge[dashed] (a11)
(a2) edge (a1)
(a1) edge (a12)
(a13) edge[dashed] (a12)
(a2) edge (a14)
(a15) edge[dashed] (a14)
(b7) edge[dashed] (b6)
(b6) edge (b5)
(b2) edge (b5)
(b4) edge (b5)
(b3) edge (b4)
(b4) edge (b8)
(b9) edge[dashed] (b8)
(b3) edge (b1)
(b10) edge (b3)
(b10) edge[dashed] (b11)
(b2) edge (b1)
(b1) edge (b12)
(b13) edge[dashed] (b12)
(b2) edge (b14)
(b15) edge[dashed] (b14)
(c7) edge[dashed] (c6)
(c6) edge (c5)
(c2) edge (c5)
(c5) edge (c1)
(c4) edge (c5)
(c4) edge (c1)
(c3) edge[line width=2pt] (c4)
(c4) edge (c8)
(c9) edge[dashed] (c8)
(c3) edge[line width=2pt] (c1)
(c2) edge (c1)
(c1) edge (c12)
(c13) edge[dashed] (c12)
(c2) edge (c14)
(c15) edge[dashed] (c14)
(d7) edge[dashed] (d6)
(d6) edge (d5)
(d2) edge (d5)
(d4) edge (d5)
(d4) edge (d1)
(d3) edge[line width=2pt] (d4)
(d4) edge (d8)
(d9) edge[dashed] (d8)
(d3) edge[line width=2pt] (d1)
(d2) edge (d1)
(d1) edge (d12)
(d13) edge[dashed] (d12)
(d2) edge (d14)
(d15) edge[dashed] (d14)
(e7) edge[dashed] (e6)
(e6) edge (e5)
(e2) edge[line width=2pt] (e5)
(e5) edge (e1)
(e4) edge (e5)
(e4) edge (e1)
(e3) edge[line width=2pt] (e4)
(e4) edge (e8)
(e9) edge[dashed] (e8)
(e3) edge[line width=2pt] (e1)
(e2) edge[line width=2pt] (e1)
(e1) edge (e12)
(e13) edge[dashed] (e12)
;

\end{tikzpicture}\]
\caption{{\footnotesize The bold lines indicate edges which the number of these edges can be arbitrary and dashed lines indicate path of arbitrary length.}}
\label{fig17}
\end{center}
\end{figure}
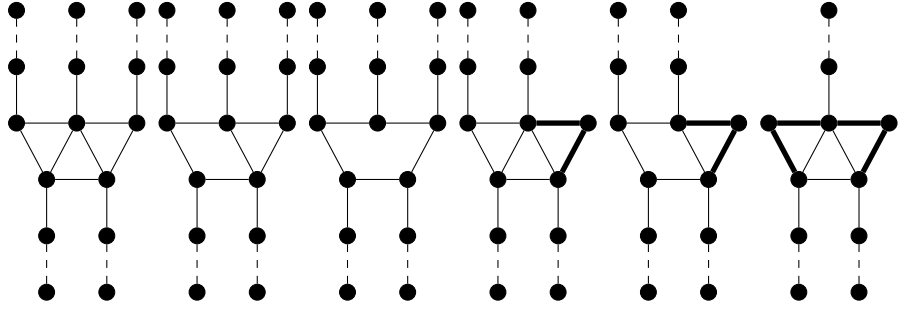
 \end{theorem}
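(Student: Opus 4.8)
The plan is to prove the two implications separately, after reducing to the connected non-path case. Since the nullity of a block-diagonal matrix is the sum of the nullities of the blocks, $M$ is additive over the connected components of $G$; as every component contributes nullity at least $1$, $M(G)=2$ with $G$ disconnected forces $G$ to be the disjoint union of exactly two paths, which is a (degenerate) graph of two parallel paths. For connected $G$ I will use the classical fact that $mr(G)=n-1$, equivalently $M(G)=1$, holds exactly when $G$ is a path; thus for connected $G$ the assertion ``$M(G)=2$'' is equivalent to ``$G$ is not a path and $mr(G)\ge n-2$'', and it is this inequality $M(G)\le 2$ that carries the real content.

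For the ``if'' direction I would check each listed family by hand. Given a graph $G$ of two parallel paths on $n$ vertices, I would (a) write down an explicit matrix $A\in S(G)$ of rank $n-2$: taking the two covering paths as two overlapping tridiagonal patterns and assigning suitably signed entries on the non-interrupting cross segments produces a matrix whose rows obey exactly two independent dependencies, so $M(G)\ge 2$; and (b) exhibit a zero forcing set of size $2$, namely the two ``initial'' vertices of the two paths, and verify that, precisely because the cross edges do not interleave, the color-change rule propagates along both paths at once, giving $Z(G)\le 2$ and hence $M(G)\le Z(G)\le 2$ by the bound of \cite{7}. Each graph in Fig.~\ref{fig17} is handled the same way: a two-element zero forcing set can be read off the picture and a rank-$(n-2)$ matrix written down from it, and since none of these graphs is a path, $M(G)=2$.

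For the ``only if'' direction, assume $G$ is connected, is not a path, and $M(G)=2$; I want to conclude that $G$ is a graph of two parallel paths or one of the graphs of Fig.~\ref{fig17}. First I would peel off cut vertices using the known cut-vertex reduction formula for minimum rank: if $v$ is a cut vertex with branches $G_1,\dots,G_h$ at $v$, the formula expresses $mr(G)$ through the $mr(G_i)$ and $mr(G_i-v)$, and $M(G)=2$ then forces all but at most two of the branches to be paths and limits how they attach; iterating, $G$ is a small $2$-connected ``core'' with pendant paths hung on it. Next I would treat $2$-connected $G$: the goal is to isolate a short list of configurations — for instance a $K_4$ or a $K_{2,3}$, together with a few other small graphs and cycle arrangements — each of which is checked directly to have $M\ge 3$, and to show that a $2$-connected graph avoiding all of them is a cycle, a $\theta$-graph, or one of the remaining pictures; splicing this back together with the pendant-path description from the cut-vertex step gives exactly the graphs of two parallel paths together with the Fig.~\ref{fig17} list.

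The main obstacle is this last, $2$-connected, case of the ``only if'' direction. Because $M$ is not monotone under passing to induced subgraphs, one cannot merely say ``$G$ contains a forbidden configuration, hence $M(G)\ge 3$''; instead each configuration on the forbidden list needs its own explicit certificate, a symmetric matrix of nullity at least three with the prescribed zero pattern, and conversely the positive half — that every $2$-connected graph avoiding the list really is one of the drawn graphs — requires a long, carefully organized case analysis by the number and mutual position of the vertices of degree at least $3$ and of the cycles, the crux being to show that the cross edges between the two covering paths cannot interleave. The cut-vertex and $2$-separation reductions keep all of this bookkeeping finite, but it is where essentially all of the work lies.
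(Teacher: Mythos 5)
First, a point of order: the paper does not prove this statement at all --- it is quoted verbatim from Johnson, Loewy and Smith \cite{3}, so there is no in-paper proof to compare yours against. Judged on its own terms, your proposal is an outline rather than a proof, and it contains one step that is actually false. In the ``if'' direction you propose to certify $M(G)\le 2$ for the exceptional families of Fig.~\ref{fig17} by ``reading off'' a two-element zero forcing set and invoking $M(G)\le Z(G)$ from \cite{7}. This cannot work: by Theorem~\ref{a5} (Row \cite{5}), $Z(G)=2$ holds \emph{exactly} when $G$ is a graph of two parallel paths, and the Fig.~\ref{fig17} types are precisely the graphs in the classification that are \emph{not} of that form. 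For them $Z(G)\ge 3$ while $M(G)=2$; they are standard examples of the gap between $M$ and $Z$. So the upper bound for these families needs a genuinely different argument --- in \cite{3} it comes from a direct analysis of eigenvalue multiplicities (Parter--Wiener-type arguments at the high-degree vertex), not from zero forcing.

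Second, the ``only if'' direction --- which you yourself identify as carrying ``essentially all of the work'' --- is left entirely as a plan: the cut-vertex reduction, the list of forbidden configurations each needing a nullity-$3$ certificate, and the structural argument that a $2$-connected graph avoiding them is a cycle, a $\theta$-graph, or one of the drawn types are all named but not executed. Since this is the entire content of the classification (a long case analysis occupying most of \cite{3}), the proposal as written does not establish the theorem; it is a plausible roadmap with one broken signpost (the zero-forcing step above) and the destination not reached.
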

  In 2012, Darren D. Row \cite{5} studied the zero forcing number of two parallel paths graphs and proved the following theorem.
 \begin{theorem}\cite{5}\label{a5}
Let $G$ be a graph. Then $Z(G)=2$ if and only if $G$ is a graph of two parallel paths.
\end{theorem}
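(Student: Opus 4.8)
Here is the plan. The strategy is to attack both implications directly through the forcing process, leaning on two standard structural facts that I would either cite from \cite{7} or dispose of in a line: first, that $Z(G)=1$ precisely when $G$ is a path; and second, that for any zero forcing set $S$ together with a fixed chronological list of forces, the vertices of $G$ split into exactly $|S|$ \emph{forcing chains} which partition $V(G)$, each chain being an induced path of $G$. The induced-path claim is immediate: a chord $v_iv_j$ of a chain $v_0\to v_1\to\cdots$ with $j\ge i+2$ would require $v_j$ to be black at the moment $v_i$ forces $v_{i+1}$, whereas $v_j$ is only colored later; and the partition claim follows by tracing each colored vertex back through its unique forcer until one reaches $S$.

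For $Z(G)=2\Rightarrow G$ a graph of two parallel paths: take a zero forcing set $\{a,b\}$ of size $2$ and a chronological list of forces, yielding two induced paths $P\colon p_1p_2\cdots p_s$ and $Q\colon q_1q_2\cdots q_t$ with $p_1=a$, $q_1=b$ that partition $V(G)$. Since $P$ and $Q$ are induced, every edge of $G$ is a path-edge of $P$, a path-edge of $Q$, or a cross edge $p_iq_j$, so all that remains is the non-crossing condition: there are no cross edges $p_iq_j$, $p_{i'}q_{j'}$ with $i<i'$ and $j>j'$. Let $\tau(v)$ denote the step at which $v$ is colored black; $\tau$ is strictly increasing along each chain. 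The crucial remark is that if $p_iq_j$ is a cross edge then, provided $i<s$, the vertex $q_j$ is black at the step when $p_i$ forces $p_{i+1}$, so $\tau(q_j)<\tau(p_{i+1})$; symmetrically $\tau(p_i)<\tau(q_{j+1})$ whenever $j<t$. If a crossing $p_iq_j$, $p_{i'}q_{j'}$ with $i<i'$, $j>j'$ existed, then $i<i'\le s$ and $j'<j\le t$ make both inequalities available, and chaining them with monotonicity of $\tau$ along $P$ and along $Q$ gives $\tau(q_j)<\tau(p_{i+1})\le\tau(p_{i'})<\tau(q_{j'+1})\le\tau(q_j)$, a contradiction. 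Hence no crossing exists and $G$ is a graph of two parallel paths.

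For the converse: a two parallel paths graph is not a path, so $Z(G)\ge 2$, and it is enough to produce a zero forcing set of size $2$. Orient the two covering paths $P\colon p_1\cdots p_s$, $Q\colon q_1\cdots q_t$ left-to-right in a non-crossing drawing, so that no cross edges $p_iq_j$, $p_{i'}q_{j'}$ have $i<i'$ and $j>j'$. I claim $\{p_1,q_1\}$ is a zero forcing set. Run the color-change rule until it halts, and suppose a white vertex remains. If $P$ (resp. $Q$) is entirely black, letting $q_l$ (resp. $p_k$) be the white vertex of least index on the other path, its black predecessor $q_{l-1}$ (resp. $p_{k-1}$) would need a second white neighbor, which must be a cross-neighbor on the path that is already all black --- impossible; so white vertices occur on both paths. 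Let $p_k,q_l$ be the white vertices of least index on $P,Q$; since $p_1,q_1$ are black, $k,l\ge 2$. As the process has halted, the black vertex $p_{k-1}$, whose only white path-neighbor is $p_k$, must have a white cross-neighbor $q_m$, and $m\ge l$; symmetrically $q_{l-1}$ has a white cross-neighbor $p_{m'}$ with $m'\ge k$. But then the cross edges $p_{k-1}q_m$ and $p_{m'}q_{l-1}$ satisfy $k-1<k\le m'$ and $m\ge l>l-1$, that is $k-1<m'$ and $m>l-1$ --- a forbidden crossing. This contradiction shows $\{p_1,q_1\}$ colors all of $G$, so $Z(G)\le 2$ and therefore $Z(G)=2$.

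The only step that really requires care is the bookkeeping of which forces are available: in the forward direction one must observe that $i<s$ and $j'<t$ hold exactly because $i<i'\le s$ and $j'<j\le t$, and in the converse one must correctly identify the stalled vertices $p_{k-1},q_{l-1}$ and check they genuinely possess white cross-neighbors; the degenerate cases where a covering path is a single vertex are then handled in a sentence. Past that, both directions collapse to the same observation that a crossing of cross edges forces a cyclic chain of strict inequalities among time-stamps, so the write-up is short once the setup is in place. (As a sanity check, one could alternatively derive the forward direction from Theorem~\ref{a3} together with $M(G)\le Z(G)$ by verifying $Z>2$ for each graph in Fig.~\ref{fig17}, but the direct timing argument above is cleaner and self-contained.)
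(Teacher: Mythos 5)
This theorem is stated in the paper purely as a citation of Row \cite{5}; no proof is given in the text, so there is no internal argument to compare against and your proposal has to be judged on its own. It holds up. The forward direction correctly extracts two forcing chains from a chronological list of forces for a minimum zero forcing set, verifies that each chain is an induced path partitioning $V(G)$, and then uses the time-stamp function $\tau$ to show that a crossing pair of cross edges would force the cyclic chain $\tau(q_j)<\tau(p_{i+1})\le\tau(p_{i'})<\tau(q_{j'+1})\le\tau(q_j)$; the bookkeeping ($i<i'\le s$ guarantees $i<s$, and $j'<j\le t$ guarantees $j'<t$, so both required forces exist) is handled correctly. The converse correctly reduces a stalled configuration to a forbidden crossing between $p_{k-1}q_m$ and $p_{m'}q_{l-1}$, and the inequalities $k-1<m'$ and $m>l-1$ are strict, so the two edges are distinct, share no endpoint, and genuinely cross. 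This is essentially the same mechanism as in \cite{5}.

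One point should be made explicit rather than left implicit. The definition of a graph of two parallel paths as worded in this paper only constrains the edges \emph{between} the two covering paths and says nothing about chords inside a single covering path. Your converse needs the covering paths to be induced: you take the path-neighbors of the stalled vertex $p_{k-1}$ to be exactly $p_{k-2}$ and $p_k$, and if $P$ carried a chord $p_{k-1}p_{k+r}$ the second white neighbor of $p_{k-1}$ could lie on $P$ itself, breaking the step that produces a white cross-neighbor $q_m$ with $m\ge l$. The definition actually used in \cite{3} and \cite{5} does require the two paths to be induced (a chord cannot be drawn as a segment without passing through intermediate vertices on the line), so your argument is correct under the intended definition, but the write-up should say so; note that your forward direction automatically produces induced chains, so it proves the statement for this stronger (correct) form of the definition. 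With that remark added, and the degenerate cases (a single-vertex covering path, or no cross edges at all, where $G$ is a disjoint union of two paths) dispatched as you indicate, the proof is complete.
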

The next theorem states that the maximum nullity of graph $G$ does not exceed $Z(G)$.
\begin{theorem}\cite{7}\label{a7}
  Let $G$ be a graph and let $Z\subseteq V(G)$ be a zero forcing set for $G$. Then $M(G)\leq |Z|$, and thus $M(G)\leq Z(G)$.
\end{theorem}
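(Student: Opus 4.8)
The plan is to work at the level of a single matrix and then pass to the maximum. Fix $A \in S(G)$ with nullity equal to $M(G)$, write $n = |V(G)|$, and consider the subspace
\[
W_Z = \{\, x \in \mathbb{R}^n : x_i = 0 \text{ for every } v_i \in Z \,\},
\]
which has dimension $n - |Z|$. The heart of the argument is the claim that $\ker A \cap W_Z = \{0\}$. Granting this, $\ker A$ and $W_Z$ are in direct sum, so $\mathrm{null}(A) + (n-|Z|) = \dim(\ker A) + \dim(W_Z) \le n$, giving $M(G) = \mathrm{null}(A) \le |Z|$; taking $Z$ of minimum cardinality then yields $M(G) \le Z(G)$.

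To prove the claim, I would take $x \in \ker A$ with $x_i = 0$ for all $v_i \in Z$ and run the color-change process that blackens all of $G$ starting from $Z$, maintaining the invariant that every currently black vertex $v_j$ satisfies $x_j = 0$. The base case is exactly the hypothesis $x|_Z = 0$. For the inductive step, suppose the invariant holds and the next force has a black vertex $v_k$ turning its unique white neighbour $v_\ell$ black. Since $v_k$ is black, $x_k = 0$. The $k$-th coordinate of $Ax = 0$ reads $a_{kk}x_k + \sum_{v_j \sim v_k} a_{kj}x_j = 0$; every neighbour of $v_k$ other than $v_\ell$ is black at this stage, so $x_j = 0$ for all such $j$, and the equation collapses to $a_{k\ell}x_\ell = 0$. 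As $v_k \sim v_\ell$ we have $a_{k\ell} \neq 0$, hence $x_\ell = 0$ and the invariant is restored. Because $Z$ is a zero forcing set the process eventually blackens every vertex, so $x = 0$, proving the claim.

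The only genuinely delicate point is making the induction precise: one must fix a valid chronological sequence of forces realizing $Z$ as a zero forcing set, and at each force invoke that the forcing vertex has exactly one white neighbour at that moment, so that all of its other neighbours occur earlier in the sequence and therefore already satisfy $x_j = 0$. Everything else is the elementary rank–nullity bookkeeping above, and the argument is self-contained, needing nothing beyond basic linear algebra (in particular it does not rely on Theorems \ref{a3} or \ref{a5}).
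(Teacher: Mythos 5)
Your proof is correct and is essentially the standard argument for this result in the cited source \cite{7} (the paper itself states the theorem without proof): one shows that any null vector of $A\in S(G)$ vanishing on a zero forcing set must vanish identically, by propagating zeros along a chronological list of forces, and then concludes $\operatorname{null}(A)\leq |Z|$ by the dimension count $\dim(\ker A)+\dim(W_Z)\leq n$. Both the forcing-propagation step and the rank--nullity bookkeeping are carried out correctly, so there is nothing to add.
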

Here, we introduce a graph operation which is used to construct families of cubic graphs, including some of
the graph families that appear in Theorem \ref{10}.
\begin{definition}
   Let $G_1$ and $G_2$ be two graphs with disjoint set of vertices. We color some vertices in $G_i$ by white and yellow, for $i=1,2$. Define $A=\{u\in V(G_1) : \text{$u$ is colored white}\}$ and $B=\{v\in V(G_2) : \text{$v$ is colored yellow or $deg(v)=1$}\}$.
   If $|A|=|B|$ and $f: A\longrightarrow B$ is a bijection map such that $f(u)=v$, then the {\it compound} of $G_1$ and $G_2$, denoted by $G_1\uplus G_2$, is a family with the vertex set $V(G_1)\cup V(G_2)$ and the edge set
    \begin{equation*}
      E(G_1)\cup E(G_2)\cup \{uv : u\in V(G_1), v\in V(G_2), f(u)=v\}.
    \end{equation*}
\end{definition}
\begin{definition}
  Let $G$ be a graph of order $n$ whose some vertices are colored yellow. Then $K_1\blacktriangle G$ is a graph constructed from a copy of $G$ and a new vertex which is adjacent to all yellow vertices and pendant vertices in $G$.
\end{definition}
In this section, some cubic graphs are constructed using some copies of $K_1, M_n$ and $T_n$, shown in Fig. \ref{fig30}, by utilizing the {\it compound} operation. Note that in this graphs, yellow (in print, gray) and white vertices are derived from Theorem \ref{10}; and $n$ shows the number of squares in the graphs. Also, define $M_0\cong P_4$ and $T_0\cong K_3$.
\begin{note}
In this construction, the operators precedence is left to right, without any priority given to different operators. See example \ref{ex27}, for more information.
\end{note}
\begin{figure}[h!]
\begin{center}
\[\begin{tikzpicture}
\vertex(a1) at (1.75,0)[label=above:$$] [fill=yellow] {};
\vertex(a2) at (2.75,0)[label=above:$$] [fill=black] {};
\vertex(a3) at (3.75,0)[label=above:$$] [fill=black] {};
\vertex(a4) at (4.75,0)[label=above:$$] [fill=black] {};
\vertex(a11) at (1.75,1)[label=above:$$] [fill=yellow] {};
\vertex(a12) at (2.75,1)[label=above:$$] [fill=black] {};
\vertex(a13) at (3.75,1)[label=above:$$] [fill=black] {};
\vertex(a14) at (4.75,1)[label=above:$$] [fill=white] {};
\vertex(a16) at (6.75,0)[label=above:$$] [fill=white] {};
\vertex(a15) at (5.75,0)[label=above:$$] [fill=white] {};
\vertex(a17) at (4.25,-1.5)[label=above:$M_n$] [draw=none] {};
 \vertex (22) at (7.5,1) [label=above:$$] [fill=yellow]{};
 \vertex (23) at (8.5,1) [label=above:$$] [fill=black]{};
 \vertex (24) at (9.5,1) [label=above :$$] [fill=black]{};
 \vertex (o2) at (11.5,0) [label=above :$$] [fill=yellow]{};
 \vertex (25) at (10.5,1) [label=above:$$] [fill=black]{};
 \vertex (27) at (7.5,0) [label=below:$$] [fill=yellow]{};
\vertex (28) at (8.5,0) [label=below:$$] [fill=black]{};
\vertex (29) at (9.5,0) [label=below:$$] [fill=black]{};
\vertex (30) at (10.5,0) [label=below:$$] [fill=black]{};
\vertex (31) at (9.5,-1) [draw=none]{$T_n$};
\path
(a1) edge (a2)
(a3) edge[dashed] (a2)
(a3) edge (a4)
(a13) edge[dashed] (a12)
(a14) edge (a13)
(a12) edge (a11)
(a1) edge (a11)
(a2) edge (a12)
(a3) edge (a13)
(a15) edge (a4)

(a15) edge (a16)

(a14) edge (a4)
(23) edge (22)
(23) edge[dashed] (24)
(25) edge (24)
(27) edge (28)
(29) edge[dashed] (28)
(29) edge (30)
(22) edge (27)
(28) edge (23)
(24) edge (29)
(25) edge (30)
(o2) edge (25)
(o2) edge(30)
;
\end{tikzpicture}\]
\caption{{\footnotesize Index $n$ shows the number of squares in the graphs.}}
\label{fig30}
\end{center}
\end{figure}
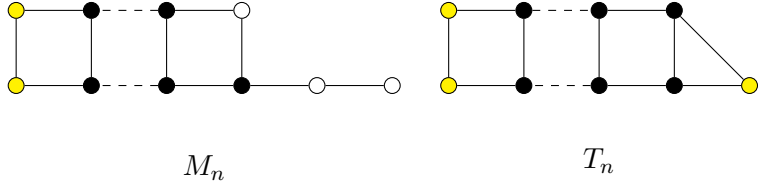
\begin{example}\label{ex27}
 Suppose that $G$ is a cubic graph in the family of $K_1\blacktriangle M_1\uplus T_0$. In Fig. \ref{fig31}, a construction of six possible cases for $G$ is shown.
\begin{figure}[h!]
\begin{center}
\[\begin{tikzpicture}
\vertex(1) at (3.5,.5)[label=above:$$] [fill=black] {};
\vertex(2) at (1,1)[label=above:$$] [fill=yellow] {};
\vertex(3) at (1,0)[label=above:$$] [fill=yellow] {};
\vertex(4) at (2,1)[label=above:$$] [fill=white] {};
\vertex(5) at (2,0)[label=above:$$] [fill=black] {};
\vertex(6) at (3,0)[label=above:$$] [fill=white] {};
\vertex(7) at (4,0)[label=above:$$] [fill=white] {};
\vertex(8) at (5,1)[label=above:$$] [fill=yellow] {};
\vertex(9) at (5,0)[label=above:$$] [fill=yellow] {};
\vertex(10) at (6,0)[label=above:$$] [fill=yellow] {};
\path
(1) edge[bend left] (2)
(3) edge[bend right] (1)
(1) edge[bend right] (7)
(2) edge (4)
(3) edge (2)
(3) edge (5)
(4) edge (5)
(4) edge (8)
(6) edge (5)
(6) edge[bend right] (9)
(7) edge (6)
(7) edge[bend right] (10)
(10) edge (9)
(10) edge (8)
(8) edge (9)
;

\end{tikzpicture}\]
\caption{{\footnotesize One element of $K_1\blacktriangle M_1\uplus T_0$.}}
\label{fig31}
\end{center}
\end{figure}
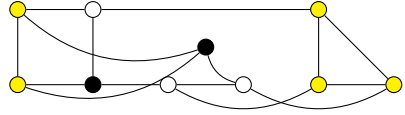
\end{example}
\subsection{Characterization}
In the following, we show that if a graph $G$ has zero forcing number $3$, then the edge connectivity of $G$ is at least $3$.
\begin{theorem}\label{9}
Let $G$ be a connected graph such that $\delta(G)\geq 3$. If $Z(G)=3$, then $\kappa'(G)\geq 3$.
\end{theorem}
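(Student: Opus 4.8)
The plan is a proof by contradiction. Assume $\kappa'(G)\le 2$ and fix a minimum edge cut $F$, so that $1\le |F|\le 2$ and $G-F$ has exactly two components, on vertex sets $V_1$ and $V_2$; fix also a minimum zero forcing set $Z$ of $G$ (so $|Z|=3$) together with a choice of forcing chains, i.e.\ three vertex-disjoint paths covering $V(G)$, each starting at a vertex of $Z$. Since the chains are vertex-disjoint they are edge-disjoint, so at most $|F|$ edges of $F$ lie on chains; for $i=1,2$ write $c_i$ for the number of chain edges of $F$ traversed, along their chain, from $V_{3-i}$ into $V_i$, so $c_1+c_2\le |F|$. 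The central step is the restriction estimate $Z(\langle V_i\rangle)\le |Z\cap V_i|+c_i$ for $i=1,2$; summing, $Z(\langle V_1\rangle)+Z(\langle V_2\rangle)\le |Z|+|F|=3+|F|$. To prove the estimate I would take as the initial black set of $\langle V_i\rangle$ the set $Z\cap V_i$ together with the $V_i$-endpoints of the $c_i$ crossing edges entering $V_i$, and then replay the $G$-process: a short induction on its steps shows that a vertex of $V_i$ that is black in the $G$-process is always black in the $\langle V_i\rangle$-process, since a force $x\to y$ of $G$ with $x,y\in V_i$ uses $N_{\langle V_i\rangle}(x)\subseteq N_G(x)$, while a vertex $y\in V_i$ can be coloured from outside $V_i$ only along a crossing edge, whose $V_i$-endpoint $y$ already lies in the initial black set. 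Hence all of $V_i$ turns black, which gives the bound.

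It follows that $\min\{Z(\langle V_1\rangle),Z(\langle V_2\rangle)\}\le \lfloor (3+|F|)/2\rfloor\le 2$, so by Theorem~\ref{a5} (and by Theorem~\ref{a3} when the value is exactly $2$) one of $\langle V_1\rangle,\langle V_2\rangle$ is a path or a graph of two parallel paths. I would then invoke the degree information: $|V_i|\ge 3$ (a smaller $V_i$ would, with $\delta(G)\ge 3$, send more than $|F|$ edges out of $V_i$), $\langle V_i\rangle$ is connected, and every vertex of $V_i$ has degree $\ge 3$ in $\langle V_i\rangle$ except the $V_i$-endpoints of the cut edges, of which there are at most $|F|\le 2$. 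But a path on $\ge 3$ vertices has two vertices of degree $1$, and a graph of two parallel paths on $\ge 3$ vertices has at least two distinct vertices of degree $\le 2$, one near each ``end'' of its parallel-path structure. When $|F|=1$ there is at most one exceptional vertex, so neither possibility can occur --- contradiction, and the bridge case is finished. When $|F|=2$ the ``path'' possibility is again impossible, since its two degree-$1$ ends would each have to carry both edges of $F$, which cannot happen for two distinct vertices; so we are reduced to the case that $\langle V_i\rangle$ is a graph of two parallel paths whose only two vertices of degree $\le 2$ are the two distinct cut-endpoints $u_1,u_2$, each of $G$-degree exactly $3$.

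Resolving this last configuration is where I expect the real work to lie. The reduction has brought $G$ to a graph built by joining a graph of two parallel paths, $\langle V_i\rangle$, to $\langle V_{3-i}\rangle$ by two edges attached precisely at the two low-degree ``end'' vertices $u_1,u_2$ of $\langle V_i\rangle$. I would finish by working directly with forcing chains in $G$: because a graph of two parallel paths has a rigid structure of covers by two vertex-disjoint paths, restricting any three forcing chains of $G$ to $V_i$ (and adjoining the at most $c_i\le 2$ points at which a chain enters $V_i$) constrains severely how $Z$ can distribute between the two sides; combining this with $\delta(G)\ge 3$ and the same local analysis applied to the first forces performed inside $V_{3-i}$ should show that $Z$ must contain at least two vertices from $V_i$ and at least two from $V_{3-i}$, whence $|Z|\ge 4$, a contradiction. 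The fiddliest part, which I would treat most carefully, is the bookkeeping: whether $u_1$ or $u_2$ lies in $Z$, the order in which the relevant forces fire, and the degenerate sub-case in which the two cut-endpoints on the $V_{3-i}$ side coincide.
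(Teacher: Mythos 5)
Your reduction is sound, and it is a genuinely different route from the paper's: the restriction estimate $Z(\langle V_i\rangle)\le |Z\cap V_i|+c_i$ obtained by replaying the forcing chains on each side of the cut is correct, and combining it with Theorem~\ref{a5} and the degree count does dispose of the bridge case and of the ``path'' subcase when $|F|=2$ (modulo the small unproved claim that a graph of two parallel paths on at least $3$ vertices has two distinct vertices of degree at most $2$, which is true but needs a line of justification). The paper proceeds quite differently: since $\delta(G)\ge 3$ and $|Z|=3$, the first force shows that $Z$ consists of one vertex together with two of its neighbours, hence lies (up to the cut-endpoints) essentially in one side $H_1$; one then follows the process as it tries to cross the cut into $H_2$ and shows that it either stalls at a black vertex with two white neighbours or manufactures a bridge, the $\kappa'=2$ case requiring the ``ladder'' propagation of Fig.~\ref{fig2}.

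The genuine gap is exactly the configuration you leave open: $|F|=2$, the two cut edges meet $V_1$ in distinct vertices $u_1,u_2$, and $\langle V_1\rangle$ is a graph of two parallel paths whose only vertices of degree at most $2$ are $u_1,u_2$. This is not a fringe case: it is realized, for instance, by $\langle V_1\rangle$ a diamond ($K_4$ minus an edge) attached to $V_2$ at its two degree-$2$ vertices, which is precisely the local picture inside the paper's families $K_1\blacktriangle M_n\uplus\cdots\uplus T_m$, so ruling it out is the real content of the theorem. Moreover, your proposed finish --- showing $|Z\cap V_1|\ge 2$ and $|Z\cap V_2|\ge 2$ --- is in tension with the localization fact above: a zero forcing set of size $3$ in a graph with $\delta\ge 3$ must be a vertex plus two of its neighbours, so $Z$ typically lies entirely inside one $V_i$; in that situation $Z(\langle V_{3-i}\rangle)\le c_{3-i}\le 2$ is simply satisfied, $\langle V_{3-i}\rangle$ being another two-parallel-paths graph attached at its two low-degree vertices, and no static counting of $|Z\cap V_i|$ or of chain segments yields a contradiction. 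What is needed is a dynamic argument that once the process has blackened the two far-side endpoints $x_1,x_2$ it can continue only by repeatedly finding adjacent pairs of degree-$3$ vertices, which by finiteness eventually produces a bridge or a stall --- this is the paper's ladder argument, and until an equivalent of it is supplied your proof is incomplete.
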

\begin{proof}
 First suppose that $\kappa'(G)=1$; and $H_1$ and $H_2$ are two components of $G$ which are adjacent by a cut edge $e=uv$, where $u\in V(H_1)$ and $v\in V(H_2)$.
  \newline
 Let $Z$ be a zero forcing set of the minimum size for $G$. Since the neighbors of the first black vertex which is performing a force in a zero forcing process belong to $Z$ except one of them, we can assume that $Z\subseteq V(H_1)\cup \{v\}$. If $v\notin Z$, then $v$ is forced by $u$; and without loss of generality it will be assumed that $v$ is black. Since $\delta(G)\geq 3$, $v$ has at least two white neighbors in $H_2$ and the zero forcing process cannot be completed.
 \vspace{0.3cm}
  \newline
  Next, suppose that $\kappa'(G)=2$; and $H_1$ and $H_2$ are two components of $G$ which are joined by an edge cut $\{e=ux, f=vy\}$, where $u,v\in V(H_1)$ and $x,y\in V(H_2)$.
  \newline
  Since the neighbors of the first black vertex which is performing a force in a zero forcing process belong to $Z$ except one of them, we can assume that $Z\subseteq V(H_1)\cup \{x,y\}$.
  If $x,y\notin Z$, then they are forced by $u$ and $v$; and without loss of generality we can assume that $x$ and $y$ are black.
   If $x$ and $y$ are not adjacent, then both of them have at least two white neighbors and so cannot perform a force, which is a contradiction. Hence $x\sim y$. Assume that $x$ and $y$ have a common neighbor, say $w$. Since $\delta(G)\geq 3$, $|N(w)|\geq 3$. If $|N(w)|=3$, then there exists $w'\in V(H_2)$ such that $e=ww'$ is a cut edge for $G$, which contradicts the fact that $\kappa'(G)=2$. Also, if $|N(w)|\geq 4$, then $w$ has at least two white neighbors which is false.Now, let $x$ and $y$ have no common neighbor and $deg(x)=deg(y)=3$. By repeating the previous procedure, $H_2$ contains a ladder (as shown in Fig.\ref{fig2}). Since $G$ is finite, the end vertices in the ladder should have a common neighbor, which similarly produce a contradiction. See Fig. \ref{fig2}, for more details.
\begin{figure}[h!]
\begin{center}
\[\begin{tikzpicture}
 \vertex (1) at (2.5,.5) [label=above:$u$] [fill=black]{};
 \vertex (2) at (2.5,0) [label=below:$v$] [fill=black]{};
  \vertex (3) at (2.5,-1.5) [draw=none]{$H_1$};
   \vertex (4) at (6,.5) [label=above:$x$] [fill=black]{};
   \vertex (5) at (6,0) [label=below:$y$] [fill=black]{};
   \vertex (6) at (6.5,.5)  [fill=black]{};
   \vertex (7) at (6.5,0)  [fill=black]{};
   \vertex (8) at (7,.5)  [fill=black]{};
   \vertex (9) at (7,0)  [fill=black]{};
   \vertex (10) at (7.5,.5)  [fill=black]{};
   \vertex (11) at (7.5,0)  [fill=black]{};
   \vertex (12) at (8.5,.25) [label=above:$w$] [fill=black]{};
   \vertex (13) at (9.25,.25) [label=above:$w'$] [fill=black]{};
   \vertex (14) at (10,.25)  [draw=none]{};
   \vertex (15) at (7.5,-1.5) [draw=none]{$H_2$};
 \path
 (1) edge[bend left=20,thick] (4)
 (6) edge (4)
 (6) edge (8)
 (10) edge[dotted] (8)
 (10) edge (12)
 (13) edge (12)
 (13) edge[dotted] (14)
 (5) edge[bend left=20, thick] (2)
 (5) edge (7)
 (9) edge (7)
 (9) edge[dotted] (11)
 (12) edge (11)
 (4) edge (5)
 (7) edge (6)
 (8) edge (9)
 (11) edge (10)
;
\draw (2.5,.25) ellipse (.75cm and 1.5cm);
\draw(7.5,.25) ellipse (3cm and 1cm);
\draw[draw=none] (10,0) circle (1);
\end{tikzpicture}\]
\caption{{\footnotesize The edge $g=ww'$ is a cut edge for $G$.}}
\label{fig2}
\end{center}
\end{figure}
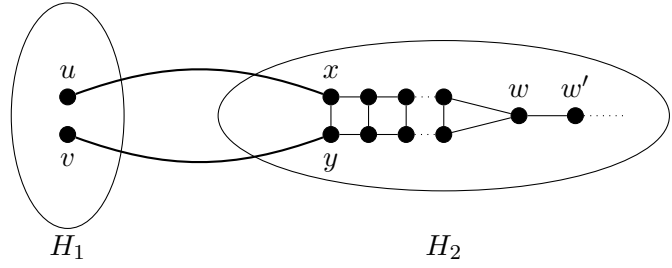
\end{proof}
\begin{theorem}\label{10}
  Let $G$ be a cubic graph. Then $Z(G)=3$ if and only if $G$ is isomorphic to one of the following graphs.
  \newline
  {\bf i.} $K_1\blacktriangle T_m$,
  \newline
  {\bf ii.} $K_1\blacktriangle G_1\uplus\ldots\uplus G_t\uplus T_m$,
  \newline
  where $G_i\cong M_n$ for some $1\leq i\leq t$; and $m$ and $n$ are non-negative integers.
\end{theorem}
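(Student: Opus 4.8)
The plan is to prove the two implications separately, with the ``if'' direction as a warm-up and most of the work going into the ``only if'' direction.

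For the \emph{if} direction, the first point to record is that every cubic graph $G$ satisfies $Z(G)\geq 3$: a cubic graph is not a path, and it is not a graph of two parallel paths either (one checks directly that a graph of two parallel paths is never $3$-regular --- e.g. the two initial vertices of the two covering paths cannot both reach degree $3$ without forcing a pair of rungs to cross), so $Z(G)\geq 3$ by Theorem~\ref{a5}. It then suffices to exhibit a zero forcing set of size $3$ for each graph in families (i) and (ii). In both cases I would take the apex vertex of the $K_1\blacktriangle(\cdot)$ construction together with the two ``free'' endpoints of the two parallel paths that make up the terminal block, and verify that the colour-change rule first sweeps along these two paths and through the triangle cap $T_0=K_3$ of $T_m$, and, in case (ii), that it then passes through the $M_n$-blocks one at a time, since each $M_n$ is itself a pair of parallel paths whose yellow ports are precisely the vertices blackened from the previous block. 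This is a routine check against Fig.~\ref{fig30}, best written as a short induction on the number $t$ of $M_n$-blocks.

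For the \emph{only if} direction, fix a minimum zero forcing set $Z=\{x,y,z\}$ of the cubic graph $G$ and run a complete forcing process; this produces three vertex-disjoint forcing chains $P_1,P_2,P_3$ which are paths covering $V(G)$. The key local fact is cubicity: when a black vertex $v$ forces its unique white neighbour, the other two neighbours of $v$ are already black, so a vertex in the interior of a chain has exactly one neighbour outside its own chain, while a chain endpoint has two or three. Combining this with Theorem~\ref{9}, which gives $\kappa'(G)\geq 3$, and with the edge count (a cubic graph on $n$ vertices has $3n/2$ edges, of which only $n-3$ lie inside the three chains), one shows that the edges running between the chains are forced into the ladder/prism pattern of $M_n$ and $T_n$: starting from a forced end of a chain, each successive vertex must be joined to the ``parallel'' vertex of another chain, building up the squares of an $M_n$ or $T_n$; and such a strip can only terminate --- without producing a second vertex of degree at most $2$ (which would drop $Z$ to $2$) or an edge cut of size less than $3$ --- by a triangle cap $T_0=K_3$ or by feeding into the next block. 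Tracking this along all three chains and isolating the single vertex of $G$ that is adjacent to all the remaining dangling ports (morally the last vertex to be forced, playing the role of the $K_1$ apex) yields exactly the descriptions $G\cong K_1\blacktriangle T_m$ or $G\cong K_1\blacktriangle G_1\uplus\cdots\uplus G_t\uplus T_m$.

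I would organise this last analysis as an induction on $|V(G)|$: once one terminal $T_m$- or $M_n$-block has been identified, delete or contract it, check that the resulting smaller cubic graph still has zero forcing number $3$ (this is exactly where the edge count and the ``no two parallel paths'' remark are needed, to be sure we have not lowered $Z$ below $3$), apply the inductive hypothesis, and reattach the block; the base cases are the few smallest cubic graphs, the smallest being $K_4=K_1\blacktriangle T_0$. The main obstacle is precisely the bookkeeping in the inductive step: proving that the between-chain edges are forced into the $M_n/T_n$ shape and into no other configuration. One must use only the three-chain structure, cubicity and $3$-edge-connectivity to rule out every irregular way the chains could interconnect --- a chain doubling back on a neighbouring chain, or two chains interleaving out of phase --- and one must handle carefully the transition between consecutive blocks and the attachment at the apex. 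Everything else, namely the reverse direction and the verification of the base cases, is mechanical.
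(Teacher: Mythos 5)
Your skeleton for the ``only if'' direction --- three forcing chains, cubicity bounding the number of neighbours outside a chain, Theorem~\ref{9} killing small edge cuts, and the ``some black vertex would be stuck with two white neighbours'' contradiction --- is exactly the engine the paper runs, except the paper does not pass through chains at all: it grows the graph outward from the first forcing vertex $u$, classifying $\langle N(u)\rangle$ (independent, $K_3$, $P_3$, or $P_2\cup K_1$) and then iterating the same trichotomy on each new frontier triple $T$ to force the next square of an $M_n$ or $T_n$, terminating in a $K_3$ cap. The problem is that this iterated case analysis \emph{is} the theorem, and you explicitly park it as ``the main obstacle'' and ``bookkeeping'' without executing a single case. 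An edge count ($n/2+3$ cross edges) is consistent with the ladder pattern but does not force it; the work of excluding a chain doubling back, two chains out of phase, etc.\ is precisely what is missing. A second, concrete defect is the induction on $|V(G)|$: deleting or contracting a terminal $T_m$- or $M_n$-block leaves the attachment vertices with degree $2$, so the residual graph is not cubic, Theorem~\ref{9} and the inductive hypothesis do not apply to it, and the claim that it ``still has zero forcing number $3$'' is unsupported in either direction. You would have to define a re-capping operation and prove it preserves $Z=3$, which is not obviously easier than the direct frontier analysis the paper uses (and which needs no induction).

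Two smaller points. For $Z(G)\ge 3$ the clean route is $Z(G)\ge\delta(G)=3$, which is what the paper invokes; your detour through Theorem~\ref{a5} also needs the (true but unproved) claim that no cubic graph is a two-parallel-paths graph. And in the ``if'' direction your proposed forcing set is wrong as stated: the apex together with the two free ends of the \emph{terminal} block does not force (in $K_1\blacktriangle T_m$ the apex, the cap, and the far row-ends all end up with two white neighbours). The correct set, as in the paper, is the apex plus two of its yellow neighbours, i.e.\ the ports of the \emph{first} block, after which the sweep runs from the apex toward the cap, not the other way around.
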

\begin{proof}
Suppose that $Z(G)=3$. Also, let $u$ be the first black vertex which is performing a force in a zero forcing process; and let $N(u)=\{u_1, u_2, u_3\}$, where $u_1$ and $u_2$ are colored black and the color of $u_3$ is white. After applying the color-change rule in a zero forcing process, $u_3$ is changed to black.
\vspace{0.3cm}
\newline
 If $N(u)$ is an independent set, then each vertex in $N(u)$ is adjacent to two white vertices. Hence there is no black vertex with a single white neighbour in the graph $G$ and so no vertex can perform a force, which is impossible.
  Also, if $\langle N(u)\rangle\cong K_3$, then $G\cong K_1\blacktriangle T_0=K_4$.
  \newline
 In the sequel, suppose that $\langle N(u)\rangle\cong P_3$; and without loss of generality, let $u_1\sim u_2\sim u_3$. Either $u_1$ and $u_3$ have a common neighbor or not, one can see that $\kappa'(G)\leq 2$, which contradicts Theorem \ref{9}.
\vspace{0.3cm}
\newline
Now, assume that $\langle N(u)\rangle\cong P_2\cup K_1$; and with no loss of generality, let $u_2\sim u_3$ and $N(u_1)=\{u, v_1, v_2\}$. We divide the proof into two cases:
\newline
{\bf Case 1.} $v_1\sim v_2$.
\newline
If $u_2\sim v_1$ and $u_3\sim v_2$, then $G$ is isomorphic to $k_1\blacktriangle T_1$ and we are done.
\newline
First, suppose that $u_2$ and $u_3$ have a common neighbor except $u$, say $x$. Obviously, $x\notin \{v_1, v_2\}$. Hence $e=uu_1$ and one edge incident with $x$ make an edge cut of size two for $G$, which contradicts Theorem \ref{9}.
\newline
Next, assume that $u_1$ has a common neighbor with $u_2$ but not $u_3$; and without loss of generality, let $u_2\sim v_1$. Then one edge incident with $v_2$ and one edge incident with $u_3$ make an edge cut of size two for $G$, a contradiction.
\newline
Now, suppose that $w_1\in N(u_2)\setminus \{u, u_3\}$ and $w_2\in N(u_3)\setminus \{u, u_2\}$. If $w_1\nsim w_2$, then each of $u, w_1$ and $w_2$ has two white neighbors and so there is no black vertex which can perform a force in a zero forcing process, which is impossible. Hence $w_1\sim w_2$. Here, if $w_1\sim v_1$ and $w_2\sim v_2$, then $G\cong K_1\blacktriangle T_2$. Otherwise, by continuing the previous procedure, the graph $G\cong K_1\blacktriangle T_m$, for some non-negative integer $m$.
\newline
{\bf Case 2.} $v_1\nsim v_2$.
\newline
If $u_2$ and $u_3$ have a common neighbor with $u_1$ except $u$, then $\kappa'(G)=1$, which contradicts Theorem \ref{9}. Also, if $u_2\sim v_1$ and $u_3\sim v_2$, then $\kappa'(G)\leq 2$, a contradiction.
\newline
Now, suppose that $u_1$ has a common neighbor with $u_2$ but not $u_3$; and without loss of generality, let $u_2\sim v_1$. Assume that $w_1\in N(v_1)\setminus \{u_1, u_2\}$ and $w_2\in N(u_3)\setminus \{u, u_2\}$.
If $w_1=w_2$, then $\kappa'(G)\leq 2$, which contradicts Theorem \ref{9}. Suppose that $T=\{v_2, w_1, w_2\}$. Similar to the beginning of the proof, $|E(\langle T\rangle)|\neq 0$. Also,
 $\langle T\rangle$ is not isomorphic to $P_3$. If $\langle T\rangle\cong K_3$, then $G\cong K_1\blacktriangle M_0\uplus T_0$. Also, if $\langle T\rangle\cong P_2\cup K_1$, then by continuing the previous procedure, $G\cong K_1\blacktriangle G_1\uplus\ldots\uplus G_t\uplus T_m$, where $G_i\cong M_0$, for some $1\leq i\leq t$ and some non-negative integer $m$.
\newline
Assume that $N(u_1)\cap (N(u_2)\cup N(u_3))=\emptyset$, $w_1\in N(u_2)\setminus \{u, u_3\}$ and $w_2\in N(u_3)\setminus \{u, u_2\}$.
If $w_1=w_2$, then $e=uu_1$ and one edge incident with $w_1$ make an edge cut of size two for $G$, which contradicts Theorem \ref{9}. Hence $w_1\neq w_2$.
\newline
 If $w_1\nsim w_2$, then each of $u_1, w_1$ and $w_2$ has two white neighbors and so there is no black vertex which can perform a force in a zero forcing process, which is impossible. Hence $w_1\sim w_2$. Here, if $w_1\sim v_1$ and $w_2\sim v_2$, then one edge incident with $v_1$ and one edge incident with $v_2$ make an edge cut of size two for $G$, which contradicts Theorem \ref{9}.
\newline
Now, assume that $w_1\sim v_1$ and $w_2\nsim v_2$. Also, let $x\in N(w_2)\setminus \{u_3, w_1\}$ and $y\in N(v_1)\setminus\{u_1, w_1\}$; and let
 $T=\{v_2, x, y\}$. Similar to the beginning of the proof, $|E(\langle T\rangle)|\neq 0$. Also, $\langle T\rangle$ is not isomorphic to $P_3$. If $\langle T\rangle\cong K_3$, then $G\cong K_1\blacktriangle M_1\uplus T_0$. Now, assume that $\langle T\rangle\cong P_2\cup K_1$. Then, by continuing the previous procedure, $G\cong K_1\blacktriangle G_1\uplus\ldots\uplus G_t\uplus T_m$, where $G_i\cong M_n$, for some $i$, $1\leq i\leq t$ and non-negative integers $m$ and $n$.
 \vspace{0.3cm}
\newline
Conversely, since $G$ is a cubic graph, $Z(G)\geq 3$. Suppose that $Z$ is the set containing $u\in V(K_1)$ and twice of yellow vertices which are adjacent to $u$. It is easy to check that after applying the zero forcing process the color of all vertices in $G$ are changed black. Thus $Z(G)=3$.
\end{proof}
\begin{corollary}
  Let $G$ be a cubic graph. Then $Z(G)=M(G)=3$ if and only if $G$ is isomorphic to one of the graphs given in Theorem \ref{10}.
\end{corollary}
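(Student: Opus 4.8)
The plan is to split along the biconditional and observe that the forward direction is essentially free: if $Z(G)=M(G)=3$, then in particular $Z(G)=3$, so Theorem~\ref{10} already forces $G$ to be one of the listed graphs. Hence all the work lies in the converse, which amounts to proving that every cubic graph $G$ appearing in Theorem~\ref{10} satisfies $M(G)=3$ (not merely $M(G)\le 3$). I would set this up as a squeeze $2<M(G)\le Z(G)=3$.

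For the upper bound, each such $G$ is cubic with $Z(G)=3$ by Theorem~\ref{10}, so Theorem~\ref{a7} gives $M(G)\le Z(G)=3$. For the lower bound I would argue by contradiction: suppose $M(G)\le 2$. Since $G$ is a connected cubic graph it is not a path, and it is standard that a connected non-path graph has maximum nullity at least $2$; hence $M(G)=2$. By Theorem~\ref{a3}, $G$ is then either a graph of two parallel paths or one of the graphs pictured in Fig.~\ref{fig17}. The first case is impossible because, by Theorem~\ref{a5}, a graph of two parallel paths has zero forcing number $2$, whereas $Z(G)=3$. The second case is impossible because each graph in Fig.~\ref{fig17} has a vertex of degree at least $4$ (the common vertex of the paths), so it is not cubic; lengthening the dashed paths only introduces vertices of degree $1$ and $2$ and cannot repair this. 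Therefore $M(G)=3$, and together with $Z(G)=3$ this yields $Z(G)=M(G)=3$, completing the converse.

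The one point requiring care is the assertion that no member of the families in Fig.~\ref{fig17} is cubic; this is a finite inspection of the finitely many core configurations drawn there, verifying in each that a vertex of degree exceeding $3$ persists no matter how the dashed paths are extended. As a consistency check, the base graph $K_1\blacktriangle T_0=K_4$ of the family in Theorem~\ref{10} has $mr(K_4)=1$, hence $M(K_4)=3$, in agreement with the claim. An alternative to the contradiction argument would be to construct, for each family, an explicit matrix in $S(G)$ of nullity $3$; I expect that to be more laborious than the above reduction, so I would prefer the squeeze via Theorems~\ref{a3} and~\ref{a5}.
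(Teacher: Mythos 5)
Your proof is correct and follows essentially the same route as the paper: the forward direction is immediate from Theorem~\ref{10}, and the converse is the squeeze $M(G)\le Z(G)=3$ (Theorem~\ref{a7}) combined with Theorems~\ref{a3} and~\ref{a5} to exclude $M(G)=2$. You are in fact slightly more careful than the paper, which never explicitly rules out the exceptional families of Fig.~\ref{fig17} (your observation that none of them is cubic) nor the degenerate possibility $M(G)\le 1$; both of these small gaps are closed by your argument.
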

\begin{proof}
Let $G$ be isomorphic to the one of the graphs given in Theorem \ref{10}. By Theorem \ref{10}, $Z(G)=3$ and Theorem \ref{a3} implies that $M(G)\leq 3$. By Theorem \ref{a7}, $G$ is not two parallel paths and so by Theorem \ref{a5}, $M(G)\geq 3$. Hence $M(G)=3$. Conversely, if $M(G)=Z(G)=3$, then by Theorem \ref{10}, $G$ is isomorphic to one of the graphs given in Theorem \ref{10}.
\end{proof}
\subsection{Note}
We consider the maximum nullity for a new family of cubic graphs which is a permutation graph consists of disjoint copies of $C_n$ and a transposition $\sigma=(ij)$ in $S_n$, which is denoted by $(C_n)_{\sigma}$. In Fig. \ref{fig32}-$(i)$ and $(ii)$, $(C_n)_{\sigma}$ is shown, where $j=i+1$ and $j\neq i+1$, respectively.
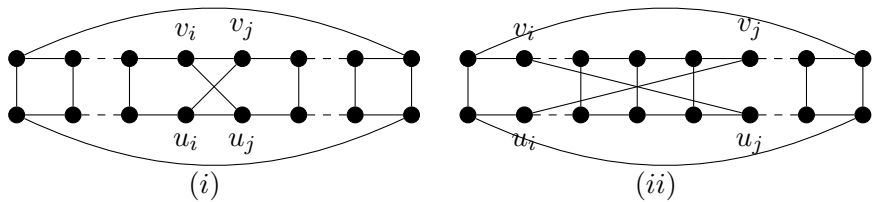
\begin{figure}[h!]
\begin{center}
\[\begin{tikzpicture}
\vertex(1) at (0,0)[label=above:$$] [fill=black] {};
\vertex(2) at (.75,0)[label=above:$$] [fill=black] {};
\vertex(3) at (1.5,0)[label=above:$$] [fill=black] {};
\vertex(4) at (2.25,0)[label=above:$v_i$] [fill=black] {};
\vertex(5) at (0,-.75)[label=above:$$] [fill=black] {};
\vertex(6) at (0.75,-.75)[label=above:$$] [fill=black] {};
\vertex(7) at (1.5,-.75)[label=above:$$] [fill=black] {};
\vertex(8) at (2.25,-.75)[label=below:$u_i$] [fill=black] {};

\vertex(11) at (3,0)[label=above:$v_j$] [fill=black] {};
\vertex(12) at (3.75,0)[label=above:$$] [fill=black] {};
\vertex(13) at (4.5,0)[label=above:$$] [fill=black] {};
\vertex(14) at (5.25,0)[label=above:$$] [fill=black] {};
\vertex(15) at (3,-.75)[label=below:$u_j$] [fill=black] {};
\vertex(16) at (3.75,-.75)[label=above:$$] [fill=black] {};
\vertex(17) at (4.5,-.75)[label=above:$$] [fill=black] {};
\vertex(18) at (5.25,-.75)[label=above:$$] [fill=black] {};
\vertex(19) at (2.5,-1.25)[label=below:$(i)$] [draw=none] {};
\vertex(a1) at (6,0)[label=above:$$] [fill=black] {};
\vertex(a2) at (6.75,0)[label=above:$v_i$] [fill=black] {};
\vertex(a3) at (7.5,0)[label=above:$$] [fill=black] {};
\vertex(a4) at (8.25,0)[label=above:$$] [fill=black] {};
\vertex(a5) at (6,-.75)[label=above:$$] [fill=black] {};
\vertex(a6) at (6.75,-.75)[label=below:$u_i$] [fill=black] {};
\vertex(a7) at (7.5,-.75)[label=above:$$] [fill=black] {};
\vertex(a8) at (8.25,-.75)[label=above:$$] [fill=black] {};

\vertex(a11) at (9,0)[label=above:$$] [fill=black] {};
\vertex(a12) at (9.75,0)[label=above:$v_j$] [fill=black] {};
\vertex(a13) at (10.5,0)[label=above:$$] [fill=black] {};
\vertex(a14) at (11.25,0)[label=above:$$] [fill=black] {};
\vertex(a15) at (9,-.75)[label=above:$$] [fill=black] {};
\vertex(a16) at (9.75,-.75)[label=below:$u_j$] [fill=black] {};
\vertex(a17) at (10.5,-.75)[label=above:$$] [fill=black] {};
\vertex(a18) at (11.25,-.75)[label=above:$$] [fill=black] {};
\vertex(a19) at (8.5,-1.25)[label=below:$(ii)$] [draw=none] {};
\path
(1) edge (2)
(1) edge (5)
(1) edge[bend left=25] (14)
(2) edge[dashed] (3)
(2) edge (6)
(3) edge (7)
(6) edge[dashed] (7)
(3) edge (4)
(7) edge (8)
(4) edge (15)
(8) edge (11)
(11) edge (12)
(15) edge (16)
(12) edge[dashed] (13)
(5) edge[bend right=25] (18)
(14) edge (18)
(14) edge (13)
(13) edge (17)
(17) edge (18)
(13) edge[dashed] (12)
(16) edge[dashed] (17)
(5) edge (6)
(4) edge (11)
(8) edge (15)
(16) edge (12)
(a1) edge (a2)
(a1) edge (a5)
(a1) edge[bend left=25] (a14)
(a2) edge[dashed] (a3)
(a2) edge (a16)
(a3) edge (a7)
(a6) edge[dashed] (a7)
(a3) edge (a4)
(a7) edge (a8)
(a15) edge (a11)
(a8) edge (a4)
(a11) edge (a12)
(a15) edge (a16)
(a12) edge[dashed] (a13)
(a5) edge[bend right=25] (a18)
(a14) edge (a18)
(a14) edge (a13)
(a13) edge (a17)
(a17) edge (a18)
(a13) edge[dashed] (a12)
(a16) edge[dashed] (a17)
(a5) edge (a6)
(a4) edge (a11)
(a8) edge (a15)
(a6) edge (a12)
;
\end{tikzpicture}\]
\caption{{\footnotesize Two different presentations of $(C_n)_{\sigma}$.}}
\label{fig32}
\end{center}
\end{figure}
\newline
In \cite{7}, it was proved that $M((C_n)_{i})=Z((C_n)_{i})=4$, where $i$ denote the identity permutation. Here, we show that $M((C_n)_{\sigma})=Z((C_n)_{\sigma})=4$, where $\sigma$ is a transposition; and consequently a new family with constant maximum nullity will be presented. In this regard, we mention to the
\textit{Colin de Verdi$\grave{e}$re}-type parameter which provides a lower bound for maximum nullity.
\newline
The parameter $\xi(G)$ was introduced as the \textit{Colin de Verdi$\grave{e}$re}-type parameter in \cite{117} for determination of minimum rank of the graph $G$.
Indeed $\xi(G)$ is defined to be the maximum multiplicity of $0$ as an eigenvalue among all matrices $A\in S_n(\mathbb{R})$ that satisfy:
\begin{itemize}
  \item[i.] $\mathcal{G}(A)=G$,
  \item[ii.]  $A$ satisfies the \textit{Strong Arnold Hypothesis}. For more details on Strong Arnold Hypothesis, see \cite{117} and \cite{10}.
\end{itemize}
It follows that $M(G)\geq \xi(G)$. In \cite{117}, it was obtained that $\xi(G)$ is minor monotone, and $\xi(K_n)=n-1$ and $\xi(K_{m,n})=m+1$, where $m\leq n$ and $n\geq 3$.
\begin{theorem}
  Let $\sigma=(ij)$ be a transposition in $S_n$. Then $M((C_n)_{\sigma})=Z((C_n)_{\sigma})=4$.
\end{theorem}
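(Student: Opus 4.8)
The plan is to prove the two inequalities $Z((C_n)_\sigma)\le 4$ and $M((C_n)_\sigma)\ge 4$ separately; since $M(G)\le Z(G)$ for every graph by Theorem~\ref{a7}, together they give $M((C_n)_\sigma)=Z((C_n)_\sigma)=4$. Fix notation: write the two $n$-cycles of $(C_n)_\sigma$ as $v_1v_2\cdots v_nv_1$ and $u_1u_2\cdots u_nu_1$, with rung edges $v_ku_k$ for all $k\notin\{i,j\}$ together with the two twisted rungs $v_iu_j$ and $v_ju_i$; all indices are read modulo $n$, and after a cyclic relabelling we may assume the swap is either at two cyclically adjacent positions ($j=i+1$) or at two non-adjacent positions ($j\ne i\pm1$, which needs $n\ge4$).

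For the upper bound I would show that $Z:=\{v_i,v_{i+1},u_i,u_{i+1}\}$ is a zero forcing set. The idea is that $v_i$ and $u_i$ are used up immediately: once $Z$ is black each of them has exactly one white neighbour, which lets $v_{i+1}$ and $u_{i+1}$ colour the next vertex on each of the two cycles. From there the colour-change rule propagates around both cycles in lock-step -- whenever the front reaches a vertex $v_k$ with $k\notin\{i,j\}$ its rung partner $u_k$ has already been blackened on the other cycle (and symmetrically), so $v_k$ can force; and when the front reaches $v_j$ and $u_j$ their twisted partners $u_i$ and $v_i$ lie in $Z$ and are black, so nothing stalls and the derived colouring is all black. (When $j=i+1$ both $v_i$ and $v_{i+1}$ are twist vertices; then $v_i$ forces backwards and $v_{i+1}$ forwards and the two fronts meet, but the bookkeeping is the same.) Hence $Z((C_n)_\sigma)\le 4$, and Theorem~\ref{a7} gives $M((C_n)_\sigma)\le 4$.

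For the lower bound I would use $M(G)\ge\xi(G)$ together with the two facts recorded just above the theorem: $\xi$ is minor monotone, and $\xi(K_{3,3})=\xi(K_5)=4$. When $n\ge4$ the graph $(C_n)_\sigma$ is non-planar -- for instance one fixes a Hamilton cycle running through the twist and checks that the conflict graph of the remaining chords contains a triangle -- so by the Kuratowski--Wagner theorem it has a $K_{3,3}$ or $K_5$ minor, whence $\xi((C_n)_\sigma)\ge4$ and $M((C_n)_\sigma)\ge4$. The case $n=3$ is degenerate: then $\sigma$ must swap two adjacent positions and, since $\mathrm{Aut}(C_3)=S_3$, $(C_3)_\sigma\cong(C_3)_i$ is the triangular prism, for which $M=Z=4$ by \cite{7} (equivalently, the prism is a graph of two parallel paths, so $mr=2$ and $M=2\cdot3-2=4$ by Theorem~\ref{a3}). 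I expect this lower bound to be the main obstacle, and the clean way to handle it is not to cite non-planarity abstractly but to write down, uniformly in $n$, $i$, $j$ and in the two cases $j=i+1$ and $j\ne i+1$, an explicit $K_{3,3}$-minor model: contract each long arc of the two cycles to a single branch vertex and route the twisted rungs through the leftover vertices. Keeping track of which vertices lie in which branch set and checking that the three connecting paths are internally disjoint is the only point that needs real care. A more computational alternative is to exhibit directly a matrix $A\in S((C_n)_\sigma)$ of rank $2n-4$, extending the realisation of $(C_n)_i$ used in \cite{7}.
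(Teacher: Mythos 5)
Your plan follows the paper's proof almost exactly: a four-vertex zero forcing set gives $Z\le 4$ (the paper takes the four vertices of any $4$-cycle; your set $\{v_i,v_{i+1},u_i,u_{i+1}\}$ also works, although in the non-adjacent case it induces a path rather than a $4$-cycle and $v_i$ then starts with \emph{two} white neighbours, so the propagation is launched by $v_{i+1}$ and $u_{i+1}$, not by $v_i$ and $u_i$ as you describe), and the lower bound comes from minor monotonicity of $\xi$. The one structural difference is that you sandwich $4\le\xi\le M\le Z\le 4$, whereas the paper first pins down $Z=4$ by invoking Theorem \ref{10} to exclude $Z=3$; both routes are valid. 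Your instinct to replace the paper's bare assertion of a $K_5$ minor by an explicit $K_{3,3}$ minor obtained from non-planarity is in fact \emph{sounder} than the paper's argument, since $\xi(K_{3,3})=3+1=4$ by the formula quoted before the theorem, and the $K_5$ claim is simply false in some instances.

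The genuine gap is that $(C_n)_\sigma$ is \emph{not} always non-planar, and the exceptional cases are exactly where both your argument and the paper's break down. For $n=3$ every transposition is an automorphism of $C_3$, so $(C_3)_\sigma$ is the triangular prism; your parenthetical treatment of this case is wrong on both counts: the prism is not a graph of two parallel paths (such graphs have $Z=2$ by Theorem \ref{a5}, while a cubic graph has $Z\ge 3$), and in fact the prism is $K_1\blacktriangle T_1$, so $M=Z=3$ by Theorem \ref{10} and its corollary. Thus the statement itself fails at $n=3$ and must be read with $n\ge 4$. For $n=4$ the antipodal transposition $(13)$ is the reflection of $C_4$ fixing $2$ and $4$, so $(C_4)_{(13)}\cong Q_3$, the planar cube: it has no $K_5$ or $K_{3,3}$ minor, your conflict-graph test will come out bipartite, and the $\xi$-via-minors route cannot give $M\ge 4$; here one must instead quote $M(Q_3)=Z(Q_3)=4$ from \cite{7}. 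In all remaining cases your outline does go through: for an adjacent swap with $n\ge 4$ the graph contains $(C_4)_{(12)}$ as a minor, whose chord interlacement graph with respect to the Hamiltonian cycle $v_1v_2u_1u_4v_4v_3u_3u_2$ contains a triangle, so it is non-planar; and for a non-adjacent swap with $n\ge 5$ one can exhibit a $K_{3,3}$ subdivision with branch classes $\{v_i,v_j,U\}$ and $\{u_i,u_j,V\}$, where $V$ and $U$ are the arcs strictly between positions $i$ and $j$ and the two missing connections $v_i$--$u_i$ and $v_j$--$u_j$ are routed as $v_iv_{i-1}u_{i-1}u_i$ and $v_jv_{j+1}u_{j+1}u_j$. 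So the repair is: restrict to $n\ge 4$, treat $Q_3$ separately, and write out the minor model for the rest --- exactly the point you flagged as needing care.
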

\begin{proof}
Suppose that $Z$ is the set of four vertices in a $4$-cycle of $(C_n)_{\sigma}$. After applying the zero forcing process, all vertices in $(C_n)_{\sigma}$ will be changed into black, and so $Z$ is a zero forcing set for $(C_n)_{\sigma}$. Thus $Z(G)\leq 4$. On the other hand, $(C_n)_{\sigma}$ is not isomorphic to the cubic graphs given in Theorem \ref{10} and so $Z(G)\neq 3$. Consequently, $Z((C_n)_{\sigma})=4$ and so by Theorem \ref{a7}, $M((C_n)_{\sigma})\leq 4$. In addition, $(C_n)_{\sigma}$ has $K_5$ as a minor and so $M((C_n)_{\sigma})\geq \xi(K_5)=4$. Hence $M((C_n)_{\sigma})=4$.
\end{proof}
We close this section with a question on cubic graphs.
\begin{question}
  Which families of cubic graphs have zero forcing number $4$? In this case, what we can say for the maximum nullity on this families of graphs?
\end{question}
\section{Spanning tree}
In this section we suppose $G$ be a simple connected graph on $n$ vertices. And we want to pick out a spanning tree $T$ with the same zero forcing set as $G$. We will construct such spanning tree consequently at each step of the algorithm having a partial tree of G.
Our algorithm uses the following rules. In the following, if $A$ is a subset of $V(G)$, then $N(A)$ is called the set made of neighbors of all the vertices in $A$.
\subsection{Algorithm description}
\begin{itemize}
  \item[(i)] Assume that $u\in V(G)$ is assigned to be the root of spanning tree $T$. Define $S_0=\{u\}$, $S_1=N(u)$ and $S_i=N(S_{i-1})\setminus S_{i-2}$, for some $i\geq 2$.
  \item[(ii)] Delete all edges with both endpoints in $S_i$, for some $i\geq 1$ (in other word, $\langle S_i\rangle$ must be isomorphic to the nil graph).
  \item[(iii)] Let $I$ be an ordered set and $k,s\in I$. Suppose that $u_k,u_s\in S_i$ such that $deg(u_k)<deg(u_s)$; or $k<s$. If $u_k$ and $u_s$ have a common neighbor in $S_{i+1}$, say $x$, then delete the edge with endpoints $u_k$ and $x$. See Figure \ref{tree}, for example.
\end{itemize}
\begin{figure}[h!]
\begin{center}
\[\begin{tikzpicture}
\vertex(1) at (-.25,0)[label=above:$$] [fill=black] {};
\vertex(2) at (-1,-.5)[label=above:$$] [fill=black] {};
\vertex(3) at (-.25,-.5)[label=above:$$] [fill=black] {};
\vertex(4) at (0.75,-.5)[label=above:$$] [fill=white] {};
\vertex(5) at (-1.25,-1)[label=above:$$] [fill=black] {};
\vertex(6) at (-.75,-1)[label=above:$$] [fill=white] {};
\vertex(7) at (0,-1)[label=above:$$] [fill=white] {};
\vertex(8) at (.5,-1)[label=above:$$] [fill=black] {};
\vertex(9) at (1,-1)[label=above:$$] [fill=white] {};
\vertex(10) at (-.5,-1.5)[label=above:$$] [fill=white] {};
\vertex(11) at (.5,-1.5)[label=above:$$] [fill=black] {};
\vertex(12) at (1,-1.5)[label=above:$$] [fill=white] {};

\vertex(a11) at (3.75,0)[label=above:$$] [fill=black] {};
\vertex(a12) at (3,-.5)[label=above:$$] [fill=black] {};
\vertex(13) at (3.75,-.5)[label=above:$$] [fill=black] {};
\vertex(14) at (4.75,-.5)[label=above:$$] [fill=white] {};
\vertex(15) at (2.75,-1)[label=above:$$] [fill=black] {};
\vertex(16) at (3.25,-1)[label=above:$$] [fill=white] {};
\vertex(17) at (3.75,-1)[label=above:$$] [fill=white] {};
\vertex(18) at (4.75,-1)[label=above:$$] [fill=black] {};
\vertex(19) at (5.5,-1)[label=above:$$] [fill=white] {};
\vertex(110) at (3.25,-1.5)[label=above:$$] [fill=white] {};
\vertex(111) at (4.5,-1.5)[label=above:$$] [fill=black] {};
\vertex(112) at (5,-1.5)[label=above:$$] [fill=white] {};
\path
(1) edge (2)
(1) edge (3)
(1) edge (4)
(5) edge (2)
(6) edge (2)
(7) edge (3)
(6) edge (3)
(9) edge (4)
(8) edge (4)
(6) edge (10)
(8) edge (11)
(9) edge (11)
(8) edge (12)
(9) edge (12)
(11) edge (12)

(a11) edge (a12)
(a11) edge (13)
(a11) edge (14)
(15) edge (a12)
(16) edge (a12)
(17) edge (13)
(19) edge (14)
(18) edge (14)
(16) edge (110)
(18) edge (111)
(18) edge (112)

;

\end{tikzpicture}\]
\caption{{\footnotesize A graph and its spanning tree, based on defined algorithm. Black vertices are located in the zero forcing set.}}
\label{tree}
\end{center}
\end{figure}
Denote by $\mathfrak{T}$ the spanning tree obtained from the algorithm. We will prove the following theorem.
\begin{theorem}\label{17}
  For any graph $G$, $Z(G)\leq Z(\mathfrak{T})$.
\end{theorem}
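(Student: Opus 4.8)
The plan is to establish the theorem in the sharper form that \emph{every} zero forcing set of $\mathfrak{T}$ is already a zero forcing set of $G$; the stated inequality then follows at once, for if $Z$ is a minimum zero forcing set of $\mathfrak{T}$ then $|Z|=Z(\mathfrak{T})$ and $Z$ forces $G$, so $Z(G)\le |Z|=Z(\mathfrak{T})$. Since the algorithm only \emph{removes} edges from $G$ — the intra-layer edges in step (ii) and the redundant-parent edges in step (iii) — what must be proved is that putting these edges back never destroys a legal force; equivalently, that any forcing process that succeeds on $\mathfrak{T}$ can be replayed, step for step, on $G$.

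To carry this out I would fix a zero forcing set $Z$ of $\mathfrak{T}$ together with a chronological list of the forces it performs on $\mathfrak{T}$. First I would record the structural facts that step (ii) makes each $\langle S_i\rangle$ edgeless and step (iii) leaves every vertex of $S_i$ with a unique neighbour (its ``parent'') in $S_{i-1}$, so that $\mathfrak{T}$ is a breadth-first spanning tree of $G$; consequently the forcing on $\mathfrak{T}$ can be scheduled layer by layer, in the sense that once $S_0\cup\cdots\cup S_i$ is entirely black every later force into $S_{i+1}$ is carried out by a parent colouring its child. The crux is then the following claim, to be proved by induction along this schedule: \emph{at the instant a vertex $v$ performs a force in $\mathfrak{T}$, every neighbour of $v$ in $G$ that is not a neighbour of $v$ in $\mathfrak{T}$ is already black}. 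Granting the claim, $v$ has in $G$ exactly the same single white neighbour it had in $\mathfrak{T}$, so the identical force is legal in $G$; running through the whole list shows $Z$ colours all of $G$ black, which is what we want. (A related and somewhat easier target, matching the remark in the abstract, is to show directly that the set of leaves of $\mathfrak{T}$ — always a zero forcing set of a tree — remains one in $G$.)

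The key claim is the main obstacle, and it is precisely where the exact wording of steps (ii) and (iii) has to be exploited. A deleted type-(ii) edge joins two vertices of a single layer $S_i$, hence is harmless the moment that layer is finished; a deleted type-(iii) edge $u_kx$ joins $u_k\in S_i$ to a vertex $x\in S_{i+1}$ whose chosen parent $u_s$ has priority over $u_k$ (larger degree, or equal degree and larger index), and from this priority rule one must deduce that $x$ has been coloured before $u_k$ is ever called on to force — or, symmetrically, that $u_k$ is black before a force would have to cross that edge from the $x$ side. Turning this into a rigorous order-of-events statement is the delicate part: I would attempt it by a double induction on the layer index and on the position within the layer-by-layer schedule, and I would also have to check that reinstating an edge never produces a \emph{new} black vertex with two white neighbours that would block some earlier force. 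This care is genuinely needed, since deleting an edge can change the zero forcing number in either direction, so the conclusion must rest on the breadth-first layering and the priority rule rather than on edge removal alone.
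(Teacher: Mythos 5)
Your proposal follows the same route as the paper: the paper's entire proof is the one--sentence assertion that the deleted edges ``have no role'' in a zero forcing process on $G$, which is exactly your claim that every zero forcing set of $\mathfrak{T}$ is a zero forcing set of $G$. You are right to identify this as the crux and to flag that it needs a genuine argument (the paper supplies none), but the step you leave open --- the claim that at the instant a vertex $v$ performs a force in $\mathfrak{T}$, every $G$-neighbour of $v$ that is not a $\mathfrak{T}$-neighbour is already black --- is not merely delicate: it is false, and so is the theorem as literally stated. Take $G=C_4$ with vertices $1,2,3,4$ in cyclic order and root $u=1$. Then $S_0=\{1\}$, $S_1=\{2,4\}$, $S_2=\{3\}$; there are no intra-layer edges, and step (iii) deletes one of the two edges into the common neighbour $3$, so $\mathfrak{T}$ is the path $2$--$1$--$4$--$3$ and $Z(\mathfrak{T})=1$, while $Z(C_4)=2$. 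The minimum zero forcing set $\{2\}$ of $\mathfrak{T}$ performs its first force $2\to 1$ while the non-tree neighbour $3$ of $2$ is still white, so your key claim fails at the very first step. The cubic graph $K_4$ behaves the same way: the algorithm returns the star $K_{1,3}$, and $Z(K_4)=3>2=Z(K_{1,3})$. Note also that your preliminary scheduling claim (that forcing in $\mathfrak{T}$ proceeds layer by layer, parents forcing children) already fails here, since the minimum zero forcing set of the tree sits in $S_1$ and forces upward through the root before descending.

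Consequently no double induction on layers and priority can close the gap; the statement you would need is refuted by the smallest examples, and deleting edges can strictly lower the zero forcing number even for this particular BFS-with-priorities tree. If you want to salvage something, you must aim at the weaker assertion your parenthetical hints at: exhibit one \emph{particular} zero forcing set $Z$ of $\mathfrak{T}$ (for instance, for each internal vertex all but one of its children, together with the root) and verify directly that this specific $Z$ forces $G$; that yields $Z(G)\le |Z|$, which is all the authors actually use downstream, but it does not and cannot yield $Z(G)\le Z(\mathfrak{T})$.
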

\begin{proof}
 In this algorithm, deleted edges have no rule in zero forcing process on graph $G$. Thus, if we can find a zero forcing set for $\mathfrak{T}$, then it is also a zero forcing set for $G$, but not the smallest one. Thus $Z(G)\leq Z(\mathfrak{T})$.
\end{proof}
 \subsection{On cubic graph}
 Now, we turn our attention to the cubic graphs. let $G$ be a cubic graph; and let $Z$ and $\mathfrak{T}$ are zero forcing set and spanning tree for $G$, respectively. Suppose that $u\in V(G)$ is intended to be the root of $\mathfrak{T}$. If $Z$ wants to be the zero forcing set for the spanning tree $\mathfrak{T}$, then for each vertex of degree $3$ located in $S_i$, say $v$, we have $|(N(v)\cap S_{i+1})\cap Z|=1$. With the exception of $u$, such that $u\in Z$ and $|N(u)\cap Z|=2$.
 \newline
Assume that $n_k$ is the number of vertices of degree $k$ for spanning tree $T$. Using the notation defined above, we have the following theorem.
\begin{theorem}\label{18}
 Let $G$ be a cubic graph, and let $\mathfrak{T}$ be its spanning tree. Then $Z(\mathfrak{T})\leq n_3+2$.
\end{theorem}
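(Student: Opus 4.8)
The plan is to analyze the spanning tree $\mathfrak{T}$ produced by the algorithm and build an explicit zero forcing set whose size is controlled by $n_3$, the number of degree-$3$ vertices of $\mathfrak{T}$. First I would recall the structural facts established just above the statement: because $G$ is cubic, every vertex of $\mathfrak{T}$ has degree at most $3$, and the rules (ii)--(iii) of the algorithm guarantee that each level set $S_i$ induces the empty graph and that no two vertices of $S_i$ share a neighbour in $S_{i+1}$ that is still attached to both. Consequently $\mathfrak{T}$ is a rooted tree in which each vertex $v$ of degree $3$ has exactly two children, each degree-$2$ vertex has exactly one child, and each degree-$1$ vertex (other than possibly the root) is a leaf; the root $u$ has up to three children.

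Next I would describe the forcing set. Start by putting $u$ into $Z$ together with all but one of its children (as the paragraph preceding the theorem notes, $|N(u)\cap Z|=2$ suffices when $\deg_{\mathfrak{T}}(u)=3$, and fewer are needed otherwise). Then, processing the tree level by level, for every vertex $v$ of degree $3$ we place into $Z$ exactly one of its two children; the other child will be forced later by $v$ once $v$ is black. Vertices of degree $\le 2$ contribute nothing new to $Z$, since a black degree-$2$ vertex has a unique white neighbour (its child) and can force it, and a black degree-$1$ vertex needs no action. I would then verify by induction on the level index $i$ that, with this choice, the derived colouring is all black: at each degree-$3$ vertex one child is seeded and the other is forced, and at each degree-$\le 2$ vertex the single child is forced.

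Finally I would count: the set $Z$ consists of the root $u$, one extra child of $u$ (contributing the ``$+2$'' after absorbing $u$ itself — more precisely $u$ plus one child is $2$ vertices, matching $Z(\mathfrak{T})\ge 2$ for a non-trivial tree), plus one seeded child for each degree-$3$ vertex of $\mathfrak{T}$. Being slightly careful about whether the root itself has degree $3$ and whether it is counted among the $n_3$ vertices, the worst case gives $|Z|\le n_3+2$, hence $Z(\mathfrak{T})\le n_3+2$. The main obstacle I anticipate is the bookkeeping at the root and the precise interaction between the algorithm's tie-breaking rule (iii) and the claim that each degree-$3$ vertex really does end up with exactly two children in $\mathfrak{T}$ (so that seeding one child and forcing the other is always available); once that structural normal form for $\mathfrak{T}$ is nailed down, the induction and the count are routine.
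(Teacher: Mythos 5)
Your construction is correct and is essentially the argument the paper intends: the paper states the theorem without a formal proof, but the paragraph preceding it describes exactly your zero forcing set (the root $u$ with $|N(u)\cap Z|=2$, plus one child in $S_{i+1}$ for each degree-$3$ vertex), and your level-by-level forcing and the count $1+2+(n_3-1)=n_3+2$ when the root has degree $3$ (and fewer otherwise) complete it. Your worry about rule (iii) is unnecessary, since in any rooted tree a non-root vertex of degree $3$ automatically has exactly two children.
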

The following lemma will be used in giving an upper bound for maximum nullity in cubic graphs.
\begin{lemma}\label{2}
  Let $T$ be a spanning tree on $n$ vertices whose maximum degree is $3$. Then $T$ contains at most $n/2-1$ vertices of degree $3$.
\end{lemma}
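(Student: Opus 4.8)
The plan is to count edges of the tree in two ways, using the degree sequence, and to exploit the fact that a tree on $n$ vertices has exactly $n-1$ edges. Write $n_1, n_2, n_3$ for the number of vertices of degree $1$, $2$, $3$ in $T$ (there are no vertices of higher degree by hypothesis, and $n\geq 2$ so there are no isolated vertices). Then I would record the two basic identities
\begin{equation*}
n_1 + n_2 + n_3 = n, \qquad n_1 + 2n_2 + 3n_3 = 2|E(T)| = 2(n-1).
\end{equation*}
Subtracting the first identity from the second gives $n_2 + 2n_3 = n-2$, hence $n_3 = \tfrac{1}{2}(n - 2 - n_2) \leq \tfrac{1}{2}(n-2) = n/2 - 1$, with equality exactly when $T$ has no vertex of degree $2$. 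That is the whole argument; the inequality is an immediate consequence of $n_2 \geq 0$.

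The only point that needs a word of care is the degenerate small cases, which I would dispose of before the main count: if $n = 1$ then $T = K_1$ has no vertex of degree $3$ and $n/2 - 1 < 0$, so the statement is vacuous or one should assume $n \geq 2$; for $n \geq 2$ the handshake identity $\sum_v \deg(v) = 2(n-1)$ is valid and the computation above goes through verbatim. I would also note explicitly that "maximum degree $3$" is what lets me truncate the degree sum at $n_3$; if vertices of degree $2$ are forbidden as well (the extremal case), one gets the sharper $n_3 = n/2 - 1$ and $n$ even, but that is not needed here.

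There is no real obstacle: this is a one-line consequence of the degree-sum formula for trees, and the main "work" is simply making sure the bookkeeping of $n_1, n_2, n_3$ is set up cleanly and that the trivial case $n=1$ (or the convention $n \geq 2$) is acknowledged. Once Lemma \ref{2} is in hand, it combines with Theorem \ref{18} to give $Z(\mathfrak{T}) \leq n_3 + 2 \leq n/2 + 1$ for the spanning tree $\mathfrak{T}$ of a cubic graph $G$, and then Theorem \ref{17} transfers this to $Z(G)$; but that downstream use is outside the scope of the present lemma.
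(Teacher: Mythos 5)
Your proof is correct and follows essentially the same route as the paper: both use the two identities $n_1+n_2+n_3=n$ and $n_1+2n_2+3n_3=2(n-1)$ and combine them. Your direct subtraction yielding $n_2+2n_3=n-2$ is in fact a bit cleaner than the paper's detour through $n_3<n_1$ and $\sum_{u}\deg(u)\geq 4n_3$, which as literally written only gives $n_3\leq (n-1)/2$ and needs the sharper constant $n_1=n_3+2$ to recover the stated bound.
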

\begin{proof}
  For any spanning tree $T$, we have
  \begin{equation}\label{eq1}
    2n-2=\sum_{u\in V(T)} deg(u).
  \end{equation}
  On the other hand, since maximum degree of $T$ is $3$, it follows that
  \begin{equation}\label{eq2}
   \sum_{u\in V(T)} deg(u)=n_1+2n_2+3n_3.
  \end{equation}
  From equations \ref{eq1} and \ref{eq2}, we have
  \begin{equation}\label{eq3}
    n_3<n_1.
  \end{equation}
  By an easy computation, we conclude that $\sum_{u\in V(T)} deg(u)\geq 4n_3$, and so $n_3\leq n/2-1$.
\end{proof}
\begin{theorem}
 For a cubic graph $G$, $M(G)\leq Z(G)\leq n/2+1$
\end{theorem}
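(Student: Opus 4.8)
The plan is to chain together the three preparatory results that immediately precede the statement. First I would invoke Theorem~\ref{17}, which gives $Z(G)\le Z(\mathfrak{T})$ for any graph $G$, applied here to a cubic graph $G$ together with the spanning tree $\mathfrak{T}$ produced by the algorithm. Next I would apply Theorem~\ref{18}, which bounds $Z(\mathfrak{T})\le n_3+2$, where $n_3$ is the number of degree-$3$ vertices of $\mathfrak{T}$. Since $G$ is cubic, $\mathfrak{T}$ is a spanning tree of maximum degree at most $3$, so Lemma~\ref{2} applies and yields $n_3\le n/2-1$. Substituting gives $Z(\mathfrak{T})\le (n/2-1)+2 = n/2+1$, hence $Z(G)\le n/2+1$.

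To close the chain on the left, I would recall Theorem~\ref{a7}, which states $M(G)\le Z(G)$ for every graph. Combining this with the bound just established produces
\[
M(G)\le Z(G)\le n/2+1,
\]
which is exactly the claimed inequality. So the argument is essentially a three-line composition of cited results, and the only care needed is to check the hypotheses match: $G$ must be cubic (so that $\mathfrak{T}$ has maximum degree $3$, as required by Lemma~\ref{2} and Theorem~\ref{18}), and $\mathfrak{T}$ must be the spanning tree delivered by the algorithm of the previous subsection (so that Theorem~\ref{17} applies verbatim).

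I do not anticipate a genuine obstacle here, since all the work has been front-loaded into the three lemmas. The one point that warrants a sentence of justification is that the algorithm does in fact produce a \emph{spanning} tree of $G$ with maximum degree at most $3$ when $G$ is cubic — this is implicit in the algorithm's description (it only deletes edges, never adds them, and the deletion rules in steps (ii)--(iii) cannot disconnect the graph because each deleted edge closes a cycle through lower levels $S_{i-1}, S_i, S_{i+1}$), so the resulting $\mathfrak{T}$ inherits $\Delta(\mathfrak{T})\le\Delta(G)=3$. Once that observation is in place, the inequality follows mechanically, so in the write-up I would simply state: ``By Theorem~\ref{17} and Theorem~\ref{18}, $Z(G)\le Z(\mathfrak{T})\le n_3+2$; by Lemma~\ref{2}, $n_3\le n/2-1$; hence $Z(G)\le n/2+1$, and Theorem~\ref{a7} gives $M(G)\le Z(G)\le n/2+1$.''
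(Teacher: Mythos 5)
Your proposal is correct and is exactly the paper's argument: the paper's proof simply cites Theorems~\ref{17} and~\ref{18} and Lemma~\ref{2} (together with Theorem~\ref{a7} for $M(G)\leq Z(G)$) and calls the rest straightforward. Your added remark that the algorithm's output $\mathfrak{T}$ is indeed a spanning tree of maximum degree at most $3$ is a reasonable extra sanity check that the paper leaves implicit.
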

\begin{proof}
From Theorems \ref{17} and \ref{18} and Lemma \ref{2}, the proof is straightforward.
\end{proof}
  \section{Eigenvalues and the maximum nullity $M(G)$}
  We now use the multiplicity of eigenvalues of $G$ to bound $M(G)$ from below. Next, we determine the maximum nullity of some well-known regular graphs. In the following theorem, $\chi(A,x)$ denote the characteristic polynomial of matrix $A$.
  \begin{theorem}\label{1}
  Let $G$ be a graph of order $n$, and let $\lambda_i$ be its eigenvalue with respective multiplicity $n_i$. Then $M(G)\geq n_i$.
\end{theorem}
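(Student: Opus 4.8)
The plan is to exhibit a single matrix in $S(G)$ realizing nullity $n_i$, namely a scalar shift of the adjacency matrix. First I would let $A = A(G)$ be the adjacency matrix of $G$, so that $A$ is a real symmetric matrix of order $n$ whose off-diagonal pattern is exactly the adjacency of $G$; that is, $A \in S(G)$. By hypothesis, $\lambda_i$ is a root of $\chi(A,x)$ with multiplicity $n_i$, where here "multiplicity" is the algebraic multiplicity of $\lambda_i$ as an eigenvalue of $A$.

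Next I would form $B = A - \lambda_i I$. The key observation is that subtracting $\lambda_i I$ changes only the diagonal entries of $A$, and the diagonal plays no role in determining $\mathcal{G}(B)$; hence $\mathcal{G}(B) = \mathcal{G}(A) = G$, so $B \in S(G)$ as well. Moreover $B$ is still real and symmetric.

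The heart of the argument is then a standard fact about real symmetric matrices: since $A$ is symmetric it is orthogonally diagonalizable, so the geometric multiplicity of any eigenvalue equals its algebraic multiplicity. Consequently the dimension of the kernel of $B = A - \lambda_i I$, which is precisely the eigenspace of $A$ associated to $\lambda_i$, has dimension $n_i$. Therefore the nullity of $B$ equals $n_i$.

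Finally, since $B \in S(G)$ and $M(G)$ is by definition the maximum nullity over all matrices in $S(G)$, we conclude $M(G) \ge \operatorname{null}(B) = n_i$, as claimed. The only point requiring any care — and the closest thing to an "obstacle" — is keeping the bookkeeping straight between algebraic and geometric multiplicity; this is resolved immediately by the diagonalizability of real symmetric matrices, so the proof is short.
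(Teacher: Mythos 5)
Your proof is correct and follows essentially the same route as the paper: shift the adjacency matrix by $\lambda_i I$ to obtain a matrix in $S(G)$ with nullity $n_i$. You are in fact slightly more careful than the paper, which only verifies that $0$ is a root of the characteristic polynomial of $B$ with multiplicity $n_i$ and leaves implicit the identification of algebraic and geometric multiplicity that your appeal to diagonalizability of real symmetric matrices makes explicit.
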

\begin{proof}
  Suppose that $A$ is incidence matrix of $G$, and let $\lambda_i$ be its characteristic value with multiplicity $n_i$. Define $B=A-\lambda_iI_n$, and it is clear that $B\in S(G)$. We have
  \begin{equation*}
    \chi(B,x)=det(xI_n-(A-\lambda_iI_n))=det((x+\lambda_i)I_n-A)=\chi(A,x+\lambda_i).
  \end{equation*}
Since $\lambda_i$ is a characteristic value of $A$ with multiplicity $n_i$, then $0$ is a characteristic value of $B$ with the same multiplicity, which implies that $M(G)\geq n_i$.
\end{proof}
We now turn our attention to {\em Heawood graph}; for such cubic graph Theorem \ref{1}, gives the useful lower bound for maximum nullity.
\newline
A {\em symmetric design} with parameters $(\nu, k, \lambda)$ is a set $P$ of points and a set $B$ of blocks such that $|P|=|B|=\nu$, each block has $k$ points and each point is in $k$ blocks, and each pair of points is in $\lambda$ blocks. When $\lambda=1$ we have the incidence graph of a {\em projective plane}, the case $k=3$ is {\em Heawoods graph}. the Heawood graph is an undirected graph with the set of points $P=\{1,2,\ldots, 7\}$ and the set of blocks $B=\{(124), (136), (137), (156), (257), (345), (467)\}$. Join vertex $i$ to the block $B_j$, $1\leq i,j\leq 7$, if $i\in B_j$. Thus Heawood graph is a $3$-regular bipartite graph.
\begin{theorem}
  Let $G$ be the Heawood graph. Then $M(G)=Z(G)=6$.
\end{theorem}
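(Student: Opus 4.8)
The plan is to establish $M(G)=Z(G)=6$ for the Heawood graph $G$ in two halves: first bound $Z(G)\le 6$ by exhibiting an explicit zero forcing set of size $6$, then bound $M(G)\ge 6$ from below using the eigenvalue multiplicity estimate of Theorem~\ref{1}, and finally invoke Theorem~\ref{a7} ($M(G)\le Z(G)$) to squeeze everything together. Since Theorem~\ref{a7} gives $M(G)\le Z(G)\le 6$, it suffices to produce a zero forcing set of size $6$ and an eigenvalue of multiplicity $6$; the two bounds then collapse to equality.

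For the lower bound $M(G)\ge 6$, I would compute the spectrum of the Heawood graph. It is a well-known $3$-regular bipartite graph on $14$ vertices; its adjacency eigenvalues are $3$ and $-3$ (each simple, since the graph is connected and bipartite) together with $\pm\sqrt{2}$, each with multiplicity $6$. One can see the multiplicity-$6$ claim cleanly: the Heawood graph is the bipartite incidence graph of the Fano plane, so $A = \begin{pmatrix} 0 & N \\ N^{\top} & 0\end{pmatrix}$ where $N$ is the $7\times 7$ point–block incidence matrix of $\mathrm{PG}(2,2)$. Then $NN^{\top} = 2I_7 + J_7$ has eigenvalue $2$ with multiplicity $6$ (eigenspace $\mathbf{1}^{\perp}$) and eigenvalue $9$ once, so the nonzero eigenvalues of $A$ are $\pm 3$ (once each) and $\pm\sqrt{2}$ (six times each). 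Taking $\lambda_i=\sqrt{2}$ with $n_i=6$ in Theorem~\ref{1} yields $M(G)\ge 6$.

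For the upper bound $Z(G)\le 6$, I would pick a concrete set $Z$ of six vertices and run the color-change rule to verify the derived coloring is all black; the natural candidates are sets adapted to the point/block bipartition (e.g.\ a well-chosen point together with several blocks, or a short "path-like" initial segment), and one checks by hand that forces propagate through all $14$ vertices. Because $Z(G)\le 6$ forces $M(G)\le Z(G)\le 6$ by Theorem~\ref{a7}, combining with $M(G)\ge 6$ gives $M(G)=6$, hence also $6\le Z(G)\le 6$, i.e.\ $Z(G)=6$.

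The main obstacle is the upper bound on $Z(G)$: one must actually exhibit a six-vertex set from which the color-change rule fills the whole graph, and — to be fully rigorous that $Z(G)$ is not smaller — note that the lower bound $M(G)\ge 6$ together with $M(G)\le Z(G)$ already rules out $Z(G)\le 5$, so no separate argument that five vertices are insufficient is needed. Thus the only genuine computation is verifying that one specific size-$6$ set is a zero forcing set; the rest is bookkeeping with the Fano-plane incidence structure and the earlier theorems.
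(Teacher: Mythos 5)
Your proposal follows essentially the same route as the paper: exhibit a size-$6$ zero forcing set for $Z(G)\le 6$, use the multiplicity-$6$ eigenvalue $\sqrt{2}$ with Theorem~\ref{1} for $M(G)\ge 6$, and close the gap via Theorem~\ref{a7}; your derivation of the spectrum from $NN^{\top}=2I_7+J_7$ is a nice self-contained substitute for the paper's citation. The only thing left to do is to actually name the forcing set rather than promising one --- the paper uses $Z=\{1,2,3,(124),(156),(345)\}$, and writing down such a set and checking the forces is the one concrete verification your sketch defers.
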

\begin{proof}
Define $Z=\{1,2,3,(124),(156),(345)\}$. It is easy to check that $Z$ form a zero forcing set, so $M(G)\leq Z(G)\leq 6$. On the other hand, $G$ has $\sqrt{2}$ as an eigenvalue with multiplicity $6$ (for more details, see \cite{zhou}), and by Lemma \ref{1}, $M(G)\geq 6$. Thus $M(G)=Z(G)=6$.
\end{proof}
Assume that $u_1, u_2, \ldots, u_{t-1}$ and $u_t$ are vertices in $G$ whose $N(u_1)=\ldots=N(u_t)$. If $A$ is the incident matrix of $G$, then the rows of $A$ corresponded to $u_i$, $1\leq i\leq t$, are linearly dependent. Thus, $null(A)\geq t-1$. This method implies the following lemma.
\begin{lemma}\label{14}
  Let $G$ be a graph of order $n$, and let $S_i\subseteq V(G)$ be a set of vertices with the same neighbors, for some $1\leq i\leq t$. Then $M(G)\geq \sum\limits_{i=1}^{t} (|S_i|-1)$.
\end{lemma}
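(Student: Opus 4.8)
The plan is to exhibit a single matrix in $S(G)$ whose nullity already equals or exceeds $\sum_{i=1}^{t}(|S_i|-1)$, and the natural candidate is the adjacency matrix $A$ of $G$. Since $A$ is symmetric, has zero diagonal, and has $a_{jk}\neq 0$ precisely when the corresponding vertices are adjacent, we have $\mathcal{G}(A)=G$, so $A\in S(G)$ and therefore $M(G)\geq null(A)$; it thus suffices to bound $null(A)$ from below. Throughout I take $S_1,\dots,S_t$ to be pairwise disjoint, as is implicit in the statement — they are the classes of the equivalence relation ``having the same (open) neighbourhood'', and in particular any two vertices of a common $S_i$ are automatically non-adjacent.

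First I would record the key structural fact: if $N_i$ denotes the common neighbourhood of the vertices of $S_i$, then for every $u\in S_i$ the $u$-th row of $A$ is exactly the $0/1$ indicator vector of $N_i$; hence all rows of $A$ indexed by $S_i$ coincide. Fixing a reference vertex $w_i\in S_i$ for each $i$, this gives, for every $u\in S_i\setminus\{w_i\}$, that $e_u-e_{w_i}\in\ker A$: indeed $A e_u$ is the $u$-th column of $A$, which by symmetry of $A$ equals its $u$-th row, and rows $u$ and $w_i$ agree, so $A(e_u-e_{w_i})=0$.

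Finally I would assemble these kernel vectors across all classes. The family $\bigcup_{i=1}^{t}\{\,e_u-e_{w_i}:u\in S_i\setminus\{w_i\}\,\}$ has $\sum_{i=1}^{t}(|S_i|-1)$ elements; it is linearly independent because within a fixed class the vectors $e_u-e_{w_i}$ are obviously independent, while vectors attached to different classes have pairwise disjoint supports (this is where the disjointness of the $S_i$ enters). Hence $null(A)\geq\sum_{i=1}^{t}(|S_i|-1)$, and combining with $M(G)\geq null(A)$ yields the claim. This is exactly the ``equal rows are linearly dependent'' observation stated just before the lemma, applied simultaneously to several twin classes; the only point requiring a moment's thought is the disjoint-support argument that lets the per-class counts add, so I foresee no genuine obstacle.
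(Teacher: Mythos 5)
Your proof is correct and follows essentially the same route as the paper, which argues (in the paragraph preceding the lemma) that the adjacency matrix lies in $S(G)$ and that its rows indexed by a twin class coincide, forcing the nullity up by $|S_i|-1$ per class. You merely make explicit what the paper leaves implicit --- the kernel vectors $e_u-e_{w_i}$, their linear independence via disjoint supports, and the (necessary) disjointness of the classes $S_i$ --- so there is nothing to object to.
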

Here we discuss the use of this technique, and consider a family of cubic graphs whose $M(G)=Z(G)$.
\begin{theorem}\label{13}
 Let $G$ be the cubic graph on $n$ vertices given in Fig. \ref{fig35}. Then $Z(G)=M(G)=n/3+2$.
\end{theorem}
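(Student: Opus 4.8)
The plan is to trap $M(G)$ and $Z(G)$ between the same upper and lower bound $n/3+2$, exactly as in the proofs for $(C_n)_\sigma$ and the Heawood graph above. Since $M(G)\le Z(G)$ by Theorem \ref{a7}, it suffices to establish $Z(G)\le n/3+2$ and $M(G)\ge n/3+2$; the two inequalities together then force $M(G)=Z(G)=n/3+2$.

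For the upper bound I would exhibit an explicit zero forcing set $Z$ of size $n/3+2$: take the small cluster of vertices at one end of the graph in Fig. \ref{fig35} together with one suitably chosen vertex from each of the $n/3-O(1)$ repeated blocks, colour these black, and check that the colour-change rule propagates block by block until all of $V(G)$ is black. This yields $Z(G)\le n/3+2$ and hence, by Theorem \ref{a7}, also $M(G)\le n/3+2$. An alternative route is to run the spanning-tree algorithm of Section~3 on $G$: because of the block structure the output tree $\mathfrak{T}$ has exactly $n/3$ vertices of degree $3$, so Theorems \ref{17} and \ref{18} give $Z(G)\le Z(\mathfrak{T})\le n_3+2=n/3+2$ at once (one still needs the lower bound below for the matching inequality $Z(G)\ge n/3+2$).

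For the lower bound I would read enough ``redundant'' vertices off $G$ to push the nullity up. The natural tool is Lemma \ref{14}: each repeated block of $G$ contains a set $S_j$ of vertices with a common neighbourhood, and $M(G)\ge\sum_j(|S_j|-1)$; if the block decomposition is chosen so that this sum already equals $n/3+2$, we are done. If the twin contribution falls short by a bounded amount, the remaining deficit is supplied either by producing a single matrix $A\in S(G)$ realising the missing nullity on the non-repeated ``core'' of $G$, or by a minor argument using $M(G)\ge\xi(G)$ together with minor-monotonicity of $\xi$ and $\xi(K_{m,n})=m+1$, $\xi(K_\ell)=\ell-1$.

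The step I expect to be the main obstacle is this lower bound --- not exhibiting the twin vertices, which is immediate, but certifying that the nullity reaches exactly $n/3+2$ and not merely $n/3$ or $n/3+1$. In practice this means either pinning down the extra structure of $G$ responsible for the ``$+2$'', or writing out one symmetric matrix $A\in S(G)$ and carrying through the routine but delicate verification that its rank equals $2n/3-2$, which with $mr(G)+M(G)=n$ gives $M(G)=n/3+2$. The upper bound, by comparison, is a bookkeeping check on the colour-change rule (or on the spanning-tree algorithm) for the concrete graph of Fig. \ref{fig35}.
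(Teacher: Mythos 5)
Your skeleton is the same as the paper's: squeeze $M(G)$ and $Z(G)$ between a zero forcing set of size $n/3+2$ (the black vertices in Fig.~\ref{fig35}) and a nullity lower bound of $n/3+2$, using Theorem~\ref{a7}. The upper bound part of your proposal is fine and is exactly what the paper does. But the step you yourself flag as the main obstacle --- getting the lower bound up from $n/3$ (what the twins alone give via Lemma~\ref{14}) to $n/3+2$ --- is left genuinely open in your write-up: you offer two possible escape routes without committing to or verifying either, and one of them does not work. The minor-monotonicity route via $\xi$ cannot supply a bound growing like $n/3+2$: that would require a clique minor $K_{n/3+3}$ (or a comparably large complete bipartite minor) inside a cubic graph on $n$ vertices, which is impossible for large $n$ since $G$ has only $3n/2$ edges.

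The paper closes the gap with your other suggested route, and the matrix realising the nullity is simply the adjacency matrix $A$ itself. Each of the $n/6$ beads is a copy of $K_4$ minus an edge with two attachment vertices, hence contains two twin pairs; the $n/3$ twin pairs give $n/3$ independent null vectors of $A$ (differences of equal rows). On top of that, the paper exhibits two further \emph{global} linear dependencies among the rows of $A$: writing $R_{i1},\dots,R_{i6}$ for the rows of the $i$-th bead, one checks that
\[
\sum_{i}\bigl(R_{i1}+R_{i4}\bigr)=\sum_{i}\bigl(R_{i1}+R_{i2}\bigr)=\sum_{i}\bigl(R_{i2}+R_{i5}\bigr),
\]
and these two relations are independent of the twin relations, so $\operatorname{null}(A)\ge n/3+2$ and hence $M(G)\ge n/3+2$ since $A\in S(G)$. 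So to complete your proof you should replace the hedged ``either/or'' in your lower-bound paragraph by this concrete computation (or an equivalent explicit rank count for $A$); as it stands the ``$+2$'' is asserted but not proved.
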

\begin{figure}[h!]
\begin{center}
\[\begin{tikzpicture}
\vertex(1) at (0,0)[label=above:$$] [fill=black] {};
\vertex(2) at (.5,.5)[label=above:$$] [fill=black] {};
\vertex(3) at (.5,-.5)[label=above:$$] [fill=white] {};
\vertex(4) at (1,.5)[label=above:$$] [fill=white] {};
\vertex(5) at (1,-.5)[label=above:$$] [fill=black] {};
\vertex(6) at (1.5,0)[label=above:$$] [fill=white] {};
\vertex(7) at (2.5,0)[label=above:$$] [fill=white] {};
\vertex(8) at (3,.5)[label=above:$$] [fill=black] {};
\vertex(9) at (3,-.5)[label=above:$$] [fill=white] {};
\vertex(10) at (3.5,.5)[label=above:$$] [fill=white] {};
\vertex(11) at (3.5,-.5)[label=above:$$] [fill=black] {};
\vertex(12) at (4,0)[label=above:$$] [fill=white] {};
\vertex(13) at (5,0)[label=above:$$] [fill=white] {};
\vertex(14) at (5.5,.5)[label=above:$$] [fill=black] {};
\vertex(15) at (5.5,-.5)[label=above:$$] [fill=white] {};
\vertex(16) at (6,.5)[label=above:$$] [fill=white] {};
\vertex(17) at (6,-.5)[label=above:$$] [fill=black] {};
\vertex(18) at (6.5,0)[label=above:$$] [fill=black] {};
\path
(1) edge (2)
(1) edge (3)
(1) edge[out=-20,in=-30] (18)
(4) edge (2)
(5) edge (2)
(5) edge (3)
(3) edge (4)
(6) edge (4)
(6) edge (5)
(7) edge (6)
(7) edge (8)
(9) edge (7)
(8) edge (10)
(11) edge (8)
(9) edge (10)
(9) edge (11)
(10) edge (12)
(11) edge (12)
(12) edge[dashed] (13)
(14) edge (13)
(13) edge (15)
(16) edge (14)
(14) edge (17)
(16) edge (15)
(17) edge (15)
(18) edge (16)
(18) edge (17)
;

\end{tikzpicture}\]
\caption{{\footnotesize }}
\label{fig35}
\end{center}
\end{figure}
\begin{proof}
Let $A$ be the adjacency matrix of $G$. For $1\leq i\leq n/3$, suppose that $R_{i1}, R_{i2}, \ldots,R_{i6}$ are the rows in $A$ corresponding to the vertices located in the $i^{th}$ bead in $G$ clockwise, beginning from the vertex which is not twin.
We have $\sum\limits_{i=1}^{n/3} (R_{i1}+R_{i4})=\sum\limits_{i=1}^{n/3} (R_{i1}+R_{i2})=\sum\limits_{i=1}^{n/3} (R_{i2}+R_{i5})$. So $null(A)\geq 2$. Also, $2n/3$ pairs of twin vertices in $G$ implies that $null(A)\geq n/3+2$. Since $A\in S(G)$, we have $M(G)\geq n/3+2$.
On the other hand, the black vertices in $G$ (see Fig. \ref{fig35}) can perform a force in a zero forcing process, and so $Z(G)\leq n/3+2$. By Theorem \ref{a7}, we have
  $n/3+2\leq M(G)\leq Z(G)\leq n/3+2$, and we are done.
\end{proof}

\section*{Acknowledgments}
The first author is indebted to the Research Council of Sharif University of Technology for support. Research of
the second and third authors are partially supported by Imam Khomeini International University.
\section*{References}

\end{document}